\tikzstyle{punkt}=[circle, fill=black, minimum size=1mm,inner sep=0pt, draw]
\def\frk{\frak}               
\def\Phi{{\frk n}}
\def\Phi{{\frk N}}
\def\KK{{\mathbb K}}
\def\opn#1#2{\def#1{\operatorname{#2}}} 
\opn\chara{char}
\opn\length{\ell}
\opn\pd{pd}
\opn\rk{rk}
\opn\projdim{proj\,dim}
\opn\injdim{inj\,dim}
\opn\rank{rank}
\opn\depth{depth}
\opn\grade{grade}
\opn\height{height}
\opn\embdim{emb\,dim}
\opn\codim{codim}
\opn\Tr{Tr}
\opn\bigrank{big\,rank}
\opn\superheight{superheight}
\opn\lcm{lcm}
\opn\trdeg{tr\,deg}
\opn\reg{reg}
\opn\lreg{lreg}
\opn\ini{in}
\opn\lpd{lpd}
\opn\size{size}
\opn\bigsize{bigsize}
\opn\cosize{cosize}
\opn\bigcosize{bigcosize}
\opn\sdepth{sdepth}
\opn\sreg{sreg}
\opn\link{link}
\opn\fdepth{fdepth}
\opn\lin{lin}
\opn\ini{in}
\opn\div{div}
\opn\Div{Div}
\opn\cl{cl}
\opn\Cl{Cl}
\opn\Spec{Spec}
\opn\Supp{Supp}
\opn\supp{supp}
\opn\Sing{Sing}
\opn\Ass{Ass}
\opn\Min{Min}
\opn\Mon{Mon}
\opn\dstab{dstab}
\opn\astab{astab}
\opn\Syz{Syz}
\opn\Ann{Ann}
\opn\Rad{Rad}
\opn\Soc{Soc}
\opn\Im{Im}
\opn\Ker{Ker}
\opn\Coker{Coker}
\opn\Am{Am}
\opn\Hom{Hom}
\opn\Tor{Tor}
\opn\Ext{Ext}
\opn\End{End}
\opn\Aut{Aut}
\opn\id{id}
\opn\nat{nat}
\opn\pff{pf}
\opn\Pf{Pf}
\opn\GL{GL}
\opn\SL{SL}
\opn\mod{mod}
\opn\ord{ord}
\opn\Gin{Gin}
\opn\Hilb{Hilb}
\opn\sort{sort}
\opn\initial{init}
\opn\ende{end}
\opn\height{height}
\opn\type{type}
\opn\mdeg{mdeg}
\opn\aff{aff}
\opn\con{conv}
\opn\relint{relint}
\opn\st{st}
\opn\lk{lk}
\opn\cn{cn}
\opn\core{core}
\opn\vol{vol}
\opn\link{link}
\opn\star{star}
\opn\lex{lex}
\opn\sign{sign}
\opn\gr{gr}
\def\pot#1#2{#1[\kern-0.28ex[#2]\kern-0.28ex]}
\opn\dirlim{\underrightarrow{\lim}}
\opn\inivlim{\underleftarrow{\lim}}
\def\Implies{\ifmmode\Longrightarrow \else
        \unskip${}\Longrightarrow{}$\ignorespaces\fi}
\def\implies{\ifmmode\Rightarrow \else
        \unskip${}\Rightarrow{}$\ignorespaces\fi}
\def\iff{\ifmmode\Longleftrightarrow \else
        \unskip${}\Longleftrightarrow{}$\ignorespaces\fi}
\newtheorem{Theorem}{Theorem}[section]
 \newtheorem{Lemma}[Theorem]{Lemma}
 \newtheorem{Corollary}[Theorem]{Corollary}
 \newtheorem{Proposition}[Theorem]{Proposition}
 \newtheorem{Conjecture}[Theorem]{Conjecture}
\let\epsilon\varepsilon
\let\kappa=\varkappa
\def\pnt{{\raise0.5mm\hbox{\large\bf.}}}
\begin{document}
\title{Binomial edge ideals of regularity~$3$}
\author {Sara Saeedi Madani and Dariush Kiani}

\address{Sara Saeedi Madani, Department of Mathematics and Computer Science, Amirkabir University of Technology (Tehran Polytechnic), Tehran, Iran}
\email{sarasaeedi@aut.ac.ir, sarasaeedim@gmail.com}

\address{Dariush Kiani, Department of Mathematics and Computer Science, Amirkabir University of Technology (Tehran Polytechnic), Tehran, Iran, and School of Mathematics, Institute for Research in Fundamental Sciences (IPM), Tehran, Iran}
\email{dkiani@aut.ac.ir, dkiani7@gmail.com}

\begin{abstract}
Let $J_G$ be the binomial edge ideal of a graph $G$. We characterize all graphs whose binomial edge ideals, as well as their initial ideals, have regularity $3$. Consequently we characterize all graphs $G$ such that $J_G$ is extremal Gorenstein. Indeed, these characterizations are consequences of an explicit formula we obtain for the regularity of the binomial edge ideal of the join product of two graphs. Finally, by using our regularity formula, we  discuss some open problems in the literature. In particular we disprove a conjecture in \cite{CDI} on the regularity of weakly closed graphs.  

\end{abstract}

\thanks{The research of the second author was in part supported by a grant from IPM (No. 95050116). }

\subjclass[2010]{Primary 13D02; Secondary 05E40}
\keywords{Binomial edge ideal, Castelnuovo-Mumford regularity, join product of graphs.}

\maketitle

\section{Introduction}\label{introduction}

Let $G$ be a finite simple graph, (i.e. with no loops, multiple or directed edges) on $n$ vertices and the edge set $E$.  Let $S=\KK[x_1,\ldots,x_n,y_1,\ldots,y_n]$ be the polynomial ring over a field $\KK$ with the indeterminates $x_1,\ldots,x_n,y_1,\ldots,y_n$, and let $f_{ij}:=x_iy_j-x_jy_i$ for $1\leq i<j\leq n$. Then the ideal $J_G$, generated by the binomials $f_{ij}$ in $S$ where $\{i,j\}\in E$, is known as the \emph{binomial edge ideal} of $G$, namely 
\[
J_G=(f_{ij}~:~i<j~,~\{i,j\}\in E).
\]   
Note that $J_G$ could be seen as the ideal generated by a collection of $2$-minors of the generic $(2\times n)$-matrix $X$. Therefore, the binomial edge ideal of a complete graph with $n$ vertices is just the determinantal ideal of $X$ which has been already studied very well. 

Binomial edge ideals were introduced in 2010 by Herzog, Hibi, Hreinsd\'ottir, Kahle and Rauh in \cite{HHHKR} and at about the same time by Ohtani in \cite{O}. In the meantime, it has been one of the most active areas of research, and there have been several research papers on this interesting class of binomial ideals, studying many of their algebraic properties and invariants. The reduced Gr\"obner basis and primary decomposition of these ideals as well as their minimal prime ideals were investigated in \cite{HHHKR}. In certain cases, characterizations for some properties like Cohen-Macaulay-ness and Gorenstein-ness were given, see for example \cite{BN,BMS,EHH,KS1,R,RR,Z,SZ}. 

One of the efforts in studying those ideals has been concerning their minimal graded free resolution. In \cite{SK}, all binomial edge ideals, as well as their initial ideals, with linear resolution were characterized. These ideals have in fact regularity~$2$, and the only graphs (without isolated vertices) which have this property are complete graphs. As a generalization of this result, all binomial edge ideals with pure resolutions were classified in \cite{KS}. The linear strand of $J_G$ was explicitly described via the so-called generalized Eagon-Northcott complex in \cite{HKS}. Some of the graded Betti numbers of $J_G$ were also studied in \cite{D,KS,SK,ZZ}. 

The Castelnuovo-Mumford regularity has been and still is an interesting invariant arising from the minimal graded free resolution of $J_G$. 
Some lower and upper bounds for the regularity of $J_G$ and $\ini_< J_G$ have been obtained in \cite{KS2} and \cite{MM}. Indeed, for a connected graph $G$ with $n$ vertices, the regularity of $J_G$ is bounded below by the length of the longest induced path of $G$ plus $1$, and it is bounded above by $n-1$. The authors posed a conjecture in \cite{SK1} which asserts that the regularity of $J_G$ is bounded above by $c(G)+1$ where $c(G)$ is the number of maximal cliques of $G$. This conjecture has been verified for closed graphs and block graphs in \cite{EZ}, where moreover an exact formula for the regularity of $J_G$ was obtained where $G$ is a closed graph. 

Finding explicit formulas for the regularity of $J_G$ for certain graphs $G$ is an interesting problem. On the other hand, as we mentioned above, the graphs for which $J_G$ has regularity $2$ were classified. So, a natural question could be if there are combinatorial characterizations for higher regularities. In this paper, we study these questions and discuss several related problems as applications. 

This paper is organized as follows. In Section~\ref{join}, we give an exact formula for the regularity of $J_{G_1*G_2}$, ($\ini_< J_{G_1*G_2}$, resp.) in terms of the regularities of $J_{G_1}$ and $J_{G_2}$, ($\ini_< J_{G_1}$ and 
$\ini_< J_{G_2}$, resp.). Here, by $G_1*G_2$ we mean the join (product) of two graphs $G_1$ and $G_2$ on disjoint sets of vertices, which is obtained from the union of $G_1$ and $G_2$ by joining all the vertices of $G_1$ to the vertices of $G_2$, (see Section~\ref{join} for precise definition). In Section~\ref{reg=3}, by applying the main theorem from Section~\ref{join} together with a graphical classification of $P_k$-free graphs, we obtain the full characterization of graphs $G$ for which $\reg (J_G)=3$. This characterization is a recursive construction based on join of some smaller graphs. Consequently, we deduce that $\reg (J_G)=3$ if and only if $\reg (\ini_< J_G)=3$ which gives a partial positive answer to a conjecture posed in \cite{HHHKR}. We would like to note that in spite of binomial edge ideals of regularity~$2$, those of regularity~$3$ arise from a large class of graphs. The family of Threshold graphs is an example of graphs with regularity~$3$. In this section, we also discuss higher regularities. Indeed, as an application of our main theorem in Section~\ref{join}, we show that for any positive integers $3\leq t\leq n$, there exists a connected graph with $n$ vertices whose binomial edge ideal has regularity~$t$. Finally, in Section~\ref{join}, we characterize the graphs $G$ such that $S/J_G$ is extremal Gorenstein. For this purpose, we first determine all graphs $G$ for which $\reg (J_G)=3$ and $S/J_G$ is Cohen-Macaulay. 
In Section~\ref{further}, we briefly discuss some interesting conjectures and questions concerning the regularity of binomial edge ideals. First by our regularity formula for the join of graphs, we disprove a conjecture by Chaudhry, Dokuyucu and Irfan in \cite{CDI} on the regularity of so-called weakly closed graph. This conjecture asserts that the regularity of $J_G$ where $G$ is a connected weakly closed graph is equal to $\ell(G)$, (i.e. the longest length of an induced path of $G$). Indeed, we give a family of connected counter-examples to this conjecture. Moreover, we show that the difference of $\reg (J_G)$ and $\ell (G)$ could be high enough, namely $\lim_{q\rightarrow \infty} \frac{\reg (J_{G_q})}{\ell(G_q)+1}=\infty$. 

We also discuss two other conjectures from \cite{EHH} by Ene, Herzog and Hibi, and from \cite{SK1} by the authors of this paper, respectively. More precisely, we show that those problems are join-closed, and hence we can extend the classes of graphs for which those problems have been solved, (see Section~\ref{further} for precise statements of those problems).  

In this paper, all graphs are finite simple graphs, and if a graph has $n$ vertices, then we sometimes use $[n]$ to denote its set of vertices.

\section{Regularity of binomial edge ideal of join product}\label{join}

Let $G_1$ and $G_2$ be two graphs on the set of vertices $V_1$ and $V_2$, and the edge sets $E_1$ and $E_2$, respectively. We denote by $G_1*G_2$, the \emph{join product} (or \emph{join}) of two graphs $G_1$ and $G_2$, that is
the graph with vertex set $V_1\cup V_2$, and the edge set
\[
E_1\cup E_2\cup \{\{v,w\}~:~v\in V_1,~w\in V_2\}.
\]

In this section we study the behavior of the regularity of the binomial edge ideal of the join of two graphs. Indeed, in the following theorem, we give a precise formula to compute the regularity of the binomial edge ideal of the join of two graphs, as well as its initial ideals, in terms of those of original graphs. More precisely, the following is the main result of this section. 

\begin{Theorem}\label{reg-join}
	Let $G_1$ and $G_2$ be graphs on disjoint vertex sets $V_1$ and $V_2$, respectively, not both complete, and let $<$ be any term order on $S$. Then
	\begin{enumerate}
	   \item[{\em(a)}] $\mathrm{reg}(J_{G_1*G_2})=\mathrm{max}\{\mathrm{reg}(J_{G_1}),\mathrm{reg}(J_{G_2}),3\}$.
	   \item[{\em(b)}] $\mathrm{reg}(\ini_< J_{G_1*G_2})=\mathrm{max}\{\mathrm{reg}(\ini_< J_{G_1}),\mathrm{reg}(\ini_< J_{G_2}),3\}$.
	\end{enumerate}
\end{Theorem}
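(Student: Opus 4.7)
My plan is to prove (a) and (b) in parallel. The lower bound follows from standard induced-subgraph and induced-path estimates, while the upper bound requires an inductive argument via Plücker-type short exact sequences and careful colon-ideal analysis.

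For the lower bound: since $G_1$ and $G_2$ are induced subgraphs of $G_1 * G_2$, monotonicity of $\reg(J_G)$ and of $\reg(\ini_< J_G)$ under induced subgraphs gives $\reg(J_{G_1 * G_2}) \geq \max\{\reg(J_{G_1}), \reg(J_{G_2})\}$ and similarly for (b). For the bound $\geq 3$, note that since not both $G_i$ are complete, WLOG $G_1$ has a non-edge $\{u,v\}$; then for any $w \in V_2$, the triple $u,w,v$ induces a $P_3$ in $G_1 * G_2$ (no shortcut is possible, as $u,v \in V_1$ are non-adjacent). By the Matsuda--Murai bound $\reg(J_G) \geq \ell(G) + 1$, we conclude $\reg(J_{G_1 * G_2}) \geq 3$, with the analogous conclusion for initial ideals in~(b).

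For the upper bound, write $M := \max\{\reg(J_{G_1}), \reg(J_{G_2}), 3\}$ and induct on the number of non-edges of $G_1 * G_2$ inside $V_1 \cup V_2$ (all non-edges of the join are of this form, since $V_1$--$V_2$ pairs are always edges). The base case is $G_1 * G_2 = K_{|V_1|+|V_2|}$, for which $\reg(J) = 2 \leq M$. For the inductive step, pick a non-edge $\{u,v\} \subset V_1$ and use the short exact sequence
\[
0 \to \bigl(S/(J_G : f_{uv})\bigr)(-2) \to S/J_G \to S/J_{G + \{u,v\}} \to 0,
\]
giving $\reg(S/J_G) \leq \max\{\reg(S/(J_G : f_{uv})) + 2,\ \reg(S/J_{G + \{u,v\}})\}$. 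The Plücker identity $x_i f_{uv} = x_u f_{iv} - x_v f_{iu}$ (and its $y$-analogue), valid for $i \in N_G(u) \cap N_G(v)$, shows that $L := (x_i, y_i : i \in N_G(u) \cap N_G(v))$ is contained in $(J_G : f_{uv})$. In the join setting $N_G(u) \cap N_G(v) \supseteq V_2$ (for $u,v \in V_1$), so $L$ is a large linear ideal, and a further analysis reduces the computation of $(J_G : f_{uv})/L$ to a colon problem on a smaller induced subgraph of $G_1$, showing $\reg(S/(J_G : f_{uv}))$ is small (typically $0$). Hence the shifted colon term contributes at most $2 \leq M$, and the inductive hypothesis applied to $G + \{u,v\} = (G_1 + \{u,v\}) * G_2$ yields $\reg(S/J_{G + \{u,v\}}) \leq M - 1$.

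The main obstacle is that $\reg(J_{G_1 + \{u,v\}})$ need not be $\leq \reg(J_{G_1})$: adding an edge can raise the regularity of a binomial edge ideal (e.g., connecting two components of $G_1$ can produce a long induced path). To handle this, one needs a structural lemma specific to the join, exploiting the fact that any induced path of $G_1 * G_2$ of length $\geq 3$ lies entirely in $V_1$ or in $V_2$. This constrains how the regularity can change under internal edge additions and should allow one to show $\reg(J_{(G_1 + \{u,v\}) * G_2}) \leq M$ directly, closing the induction. Part~(b) follows the same template with monomial colon and quotient computations, using the admissible-path description of $\ini_< J_G$ from \cite{HHHKR}; one may additionally use $\reg(J_G) \leq \reg(\ini_< J_G)$ to transfer one direction between (a) and (b).
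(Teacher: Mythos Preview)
Your lower bound argument is essentially the same as the paper's (induced-subgraph monotonicity plus the observation that $G_1*G_2$ is not complete, forcing $\reg\geq 3$). The upper bound, however, has a genuine gap that you yourself flag but do not resolve.

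The induction on non-edges does not close. To bound $\reg(J_{G_1*G_2})$ you invoke the inductive hypothesis on $(G_1+e)*G_2$, which has fewer non-edges. But the inductive hypothesis (the theorem for graphs with fewer non-edges) only yields
\[
\reg\bigl(J_{(G_1+e)*G_2}\bigr)=\max\{\reg(J_{G_1+e}),\reg(J_{G_2}),3\},
\]
and, as you note, $\reg(J_{G_1+e})$ can exceed $\reg(J_{G_1})$. So you actually need $\reg(J_{(G_1+e)*G_2})\le M$ with the \emph{original} $M=\max\{\reg(J_{G_1}),\reg(J_{G_2}),3\}$, which is stronger than what the theorem gives for the pair $(G_1+e,G_2)$. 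Your proposed fix---a ``structural lemma'' based on the fact that long induced paths of the join lie in one part---does not help: regularity of binomial edge ideals is not controlled by induced path length (indeed, this is exactly the content of the counterexamples in Section~\ref{further}), so constraining $\ell(G)$ says nothing about $\reg(J_G)$. The lemma you would need is tantamount to the theorem itself. A secondary issue is the colon-ideal step: ``$\reg(S/(J_G:f_{uv}))$ is small (typically $0$)'' is an assertion, not an argument, and for general $G_1$ this quotient need not be trivial.

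For comparison, the paper avoids the edge-addition problem entirely by working with the primary decomposition. Using the explicit description of $\mathcal{C}(G_1*G_2)$ from Proposition~\ref{both disconnected 1} (valid when both $G_i$ are disconnected), one obtains a Mayer--Vietoris decomposition $J_G=Q\cap Q'$ with
\[
Q=(x_i,y_i:i\in V_1)+J_{G_2},\qquad Q'=J_{K_n}\cap\bigl((x_i,y_i:i\in V_2)+J_{G_1}\bigr),\qquad Q+Q'=(x_i,y_i:i\in V_1)+J_{K_{n_2}}.
\]
Each piece has regularity expressible directly in terms of $\reg(J_{G_1})$, $\reg(J_{G_2})$, and $\reg(J_{K_m})=2$, so the short exact sequence $0\to J_G\to Q\oplus Q'\to Q+Q'\to 0$ gives the bound without any need to add edges or track how $M$ changes. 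The connected case is then reduced to the disconnected one by adjoining isolated vertices and using the induced-subgraph bound. Part~(b) follows the same decomposition after checking, via Lemmas~\ref{initial-variables} and~\ref{Aldo}, that initial ideals commute with these intersections and sums.
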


The above result shows that if $\reg (J_{G_1})$ and $\reg (J_{G_2})$ do not depend on the characteristic of the field $\KK$, then 
$\reg (J_{G_1*G_2})$ does not as well. The same holds for their initial ideals.  

Note that the join of two complete graphs is complete too, so that its binomial edge ideal has a linear resolution, by \cite[Theorem~2.1]{SK}. Hence, in this case, the regularity of the binomial edge ideal is equal to $2$.

Now, we need to fix some notation. If $H$ is a graph with connected components $H_1,\ldots,H_r$, then we denote it by $\bigsqcup_{i=1}^r H_i$. In particular, if all $H_i$'s are isomorphic to a graph $G$, then for simplicity we may write $rG$. 

Let $V$ be a set. The \emph{join} of two collections of subsets  $\mathcal{A}$ and $\mathcal{B}$ of $V$, denoted by $\mathcal{A}\circ \mathcal{B}$, was introduced in \cite{KS1} as
\[
\{A\cup B: A\in \mathcal{A}, B\in \mathcal{B}\}.
\]
The join of collections of subsets $\mathcal{A}_1,\ldots,\mathcal{A}_t$ of $V$ is denoted by $\bigcirc_{i=1}^{t}\mathcal{A}_i$.

We also need to recall a nice combinatorial description of the minimal prime ideals of binomial edge ideals given in \cite{HHHKR}. Let $G$ be a graph on $[n]$, and $T\subseteq [n]$, and let $G_1,\ldots,G_{c_G(T)}$ be the connected
components of $G_{[n]\setminus T}$, the induced subgraph of $G$ on $[n]\setminus T$. For any $i$, we denote by $\widetilde{G}_i$ the complete graph on the vertex set $V(G_i)$. Let
\[
P_T(G)=(\bigcup_{i\in T}\{x_i,y_i\}, J_{\widetilde{G}_1},\ldots,J_{\widetilde{G}_{c_{G}(T)}}),
\]
which is a prime ideal in $S$.
It was shown in \cite[Theorem~3.2]{HHHKR} that $J_G=\bigcap_{T\subset [n]}P_T(G)$. 
A vertex whose removal from $G$ increases the number of connected components of $G$ is called a \emph{cut point} of $G$. If each $i\in T$ is a cut point of the graph $G_{([n]\setminus T)\cup \{i\}}$, then $T$ is said to have \emph{cut point property} for $G$. Now, let
\[
\mathcal{C}(G)=\{\emptyset\}\cup \{T\subset [n]:T~\mathrm{has~cut~point~property~for}~G\}.
\]
In particular, it is easy to see that $\mathcal{C}(G)=\{\emptyset\}$ if and only if $G$ is a complete graph. Furthermore, it was shown in \cite[Corollary~3.9]{HHHKR} that $T\in \mathcal{C}(G)$ if and only if $P_T(G)$ is a minimal prime ideal of $J_G$.

\medskip
To prove the main result of this section, we need to provide some ingredients. The next proposition from \cite{KS} describes the minimal prime ideals of $J_{G_1*G_2}$ when $G_1$ and $G_2$ are both disconnected.

\begin{Proposition}\label{both disconnected 1}
	\cite[Proposition~4.14]{KS}
	Let $G_1=\bigsqcup_{i=1}^r G_{1i}$ and $G_2=\bigsqcup_{i=1}^s G_{2i}$ be two graphs on disjoint sets of vertices
	$V_1=\bigcup_{i=1}^r V_{1i}$ and $V_2=\bigcup_{i=1}^s V_{2i}$,
	respectively, where $r,s\geq 2$. Then
	$$\mathcal{C}(G_1*G_2)=\{\emptyset\}\cup \big{(}(\bigcirc_{i=1}^{r}\mathcal{C}(G_{1i}))\circ \{V_2\} \big{)}\cup \big{(}(\bigcirc_{i=1}^{s}\mathcal{C}(G_{2i}))\circ \{V_1\} \big{)}.$$
\end{Proposition}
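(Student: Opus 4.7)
The plan is to characterize, vertex by vertex, exactly which sets $T \subseteq V_1 \cup V_2$ enjoy the cut point property in the join $G_1 * G_2$, and to read off from this the description of $\mathcal{C}(G_1 * G_2)$. The driving structural feature is that in $G_1 * G_2$ every vertex of $V_1$ is adjacent to every vertex of $V_2$, so the complete bipartite backbone between $V_1 \setminus T$ and $V_2 \setminus T$ keeps the induced subgraph connected whenever both of these sets are non-empty.

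First I would establish the dichotomy that any non-empty $T \in \mathcal{C}(G_1 * G_2)$ must satisfy $V_1 \subseteq T$ or $V_2 \subseteq T$, but not both. If neither containment held, then for every $i \in T$ both $H_i := (G_1 * G_2)_{([n] \setminus T) \cup \{i\}}$ and $H_i \setminus \{i\}$ would remain connected via the surviving join edges between $V_1 \setminus T$ and $V_2 \setminus T$, so $i$ could not be a cut point. The case $T = V_1 \cup V_2$ is also excluded, since then each $H_i$ is a single vertex whose deletion drops the component count from $1$ to $0$. By symmetry I may assume $V_2 \subseteq T$ and $V_1 \setminus T \neq \emptyset$, and write $T = V_2 \cup B$ with $B \subsetneq V_1$ and $B_j := B \cap V_{1j}$ for $j = 1, \ldots, r$.

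Next I would reduce the cut point property of $T$ at the vertices of $B$ to the cut point property of each $B_j$ in the component $G_{1j}$. For $i \in B_j$ the set $H_i$ contains no $V_2$-vertices, so all join edges disappear and
\[
H_i = \bigsqcup_{k \neq j} G_{1k}|_{V_{1k} \setminus B_k} \;\sqcup\; G_{1j}|_{(V_{1j} \setminus B_j) \cup \{i\}}.
\]
The first $r-1$ summands are unaffected by deleting $i$, so the cut point condition on $i$ in $H_i$ is equivalent to the cut point condition on $i$ in $G_{1j}|_{(V_{1j} \setminus B_j) \cup \{i\}}$, which is exactly the defining clause for $B_j \in \mathcal{C}(G_{1j})$ at $i$. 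Running this equivalence at every $i \in B_j$ gives $B_j \in \mathcal{C}(G_{1j})$ for each $j$, yielding the $\subseteq$ inclusion.

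For the reverse inclusion I would start with a candidate $T = V_2 \cup \bigcup_j B_j$ with $B_j \in \mathcal{C}(G_{1j})$ and verify the cut point property at every $i \in T$. For $i \in B_j$ the argument above runs in reverse. For $i \in V_2$ the induced graph $H_i$ is connected through $i$ to the non-empty set $V_1 \setminus B$, while $H_i \setminus \{i\} = G_1|_{V_1 \setminus B}$ splits into at least $r \geq 2$ components. The symmetric branch $T = V_1 \cup \bigcup_i A_i$ is handled by interchanging the roles of $V_1$ and $V_2$. The main technical obstacle I expect is the edge-case analysis guaranteeing $V_{1j} \setminus B_j \neq \emptyset$ for each $j$; this reduces to the observation that $V_{1j} \notin \mathcal{C}(G_{1j})$ for any non-empty connected component (removing the last surviving vertex decreases the component count from $1$ to $0$), and once this is in hand the component bookkeeping across the disjoint union is routine.
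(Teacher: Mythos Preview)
Your argument is correct. The dichotomy (any non-empty $T\in\mathcal{C}(G_1*G_2)$ must swallow all of $V_1$ or all of $V_2$) is established cleanly via the complete bipartite backbone, the reduction of the cut point condition at $i\in B_j$ to the corresponding condition inside $G_{1j}$ is accurate, and the verification at $i\in V_2$ in the reverse inclusion is handled correctly once you note that $B_j\in\mathcal{C}(G_{1j})$ forces $B_j\subsetneq V_{1j}$, so $G_1|_{V_1\setminus B}$ genuinely has at least $r\geq 2$ components. The edge case $T=V_1\cup V_2$ is also dispatched properly.

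As for comparison: the present paper does not supply a proof of this proposition at all. It is quoted verbatim from \cite[Proposition~4.14]{KS} and used as a black box in the proof of Theorem~\ref{reg-join}. So there is no ``paper's own proof'' to compare against here. Your direct combinatorial verification of the cut point property is the natural route and matches what one would expect the original argument in \cite{KS} to be.
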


The next lemma follows with the same argument as in the proof of \cite[Lemma~3.1]{CDI} where $t=1$.

\begin{Lemma}\label{initial-variables}
	{\em(see} \cite[Lemma~3.1]{CDI}{\em)}
	Let $G$ be a graph on $[n]$ and $i_1,\ldots,i_t\in [n]$, and let $<$ be any term order on $S$. Then
	\[
	\ini_< (J_G,x_{i_1},y_{i_1},\ldots,x_{i_t},y_{i_t})=
	\ini_< J_G+(x_{i_1},y_{i_1},\ldots,x_{i_t},y_{i_t}).
	\]
\end{Lemma}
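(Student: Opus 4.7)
The plan is to prove the two containments of the asserted equality separately. Set $T = \{i_1, \ldots, i_t\}$ and $L = (x_{i_1}, y_{i_1}, \ldots, x_{i_t}, y_{i_t})$.

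The forward inclusion $\ini_< J_G + L \subseteq \ini_< (J_G + L)$ is immediate: $\ini_<$ is monotone with respect to inclusion of ideals, so $\ini_< J_G \subseteq \ini_<(J_G + L)$, and since $L$ is a monomial ideal contained in $J_G + L$, we also have $L = \ini_< L \subseteq \ini_<(J_G + L)$.

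For the reverse inclusion, I would take any $f \in J_G + L$ and decompose it as $f = f_1 + f_2$, where $f_1$ collects those terms of $f$ whose monomial is divisible by at least one variable from $\{x_{i_1}, y_{i_1}, \ldots, x_{i_t}, y_{i_t}\}$ and $f_2$ collects the remaining terms. These two parts have disjoint monomial support, so the leading monomial $\ini_< f$ is either a term of $f_1$ (in which case $\ini_< f \in L$, and we are done) or equals $\ini_< f_2$. In the latter case, $f_2 = f - f_1 \in J_G + L$, and the $\KK$-algebra retraction $\phi \colon S \to S' := \KK[x_i, y_i : i \in [n] \setminus T]$ that sends the variables in $L$ to $0$ and fixes the remaining variables satisfies $\phi(J_G) = J_{G'}$, where $G' := G_{[n] \setminus T}$, simply because $\phi(f_{ij}) = f_{ij}$ when $i, j \notin T$ and $\phi(f_{ij}) = 0$ otherwise. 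Since $f_2$ is supported on monomials involving no variable from $L$, $\phi$ fixes $f_2$, and therefore $f_2 \in J_{G'}$. As every generator of $J_{G'}$ is also a generator of $J_G$, we have $J_{G'} \subseteq J_G$ under the natural inclusion $S' \hookrightarrow S$, and consequently $\ini_< f_2 \in \ini_< J_{G'} \subseteq \ini_< J_G$.

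The main obstacle is just the bookkeeping around the retraction $\phi$: one must verify that $\phi(J_G) = J_{G'}$ (which is clear on generators) and that $\phi$ fixes any polynomial supported away from the variables of $L$. No Gr\"obner basis manipulation is required, and in particular the argument sidesteps the more technical route via Buchberger's criterion applied to $\mathcal{G} \cup \{x_{i_1}, y_{i_1}, \ldots, x_{i_t}, y_{i_t}\}$ for a Gr\"obner basis $\mathcal{G}$ of $J_G$. The induction on $t$ used in the CDI proof of the $t=1$ case is replaced here by treating all $t$ variables simultaneously through the single retraction $\phi$.
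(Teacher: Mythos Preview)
Your argument is correct. The paper itself does not spell out a proof but refers to \cite[Lemma~3.1]{CDI} for the case $t=1$ and remarks that the same reasoning carries over; the implicit extension is an induction on $t$, adjoining one pair $x_{i_k},y_{i_k}$ at a time. Your route instead kills all $2t$ variables simultaneously via the retraction $\phi$ onto $S'=\KK[x_i,y_i:i\notin T]$, which is marginally cleaner: the identity $\phi(J_G)=J_{G_{[n]\setminus T}}\subseteq J_G$ is immediate on generators, and the monomial-support decomposition $f=f_1+f_2$ reduces the reverse inclusion to showing $f_2\in J_G$, which is exactly what $\phi$ delivers. Both arguments are elementary; yours simply bypasses the (trivial) induction and makes no appeal to Buchberger's criterion or any explicit Gr\"obner basis of $J_G$.
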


We also need the following lemma from \cite{C}.

\begin{Lemma}\label{Aldo}
	\cite[Lemma~1.3]{C}
	Let $R=\KK[z_1,\ldots,z_r]$ be a polynomial ring, and $I$ and $J$ be two homogeneous ideals in $S$. Let $<$ be any term order on $S$. Then the following statements are equivalent:
	\begin{enumerate}
		\item[{\em(a)}] $\ini_< (I+J)=\ini_< I+\ini_< J$;
		\item[{\em(b)}] $\ini_< (I\cap J)=\ini_< I\cap \ini_< J$.
	\end{enumerate}
\end{Lemma}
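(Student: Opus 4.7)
The plan is to prove the equivalence via Hilbert function considerations, using the crucial fact that passing to the initial ideal preserves Hilbert functions. First I would record the two inclusions that hold unconditionally. If $f\in I$, then $f\in I+J$ and $\ini_<(f)\in\ini_< I$, so $\ini_< I\subseteq \ini_<(I+J)$; symmetrically $\ini_< J\subseteq \ini_<(I+J)$, and hence
\[
\ini_< I + \ini_< J \subseteq \ini_<(I+J).
\]
Similarly, for $f\in I\cap J$ the leading term $\ini_<(f)$ lies in both $\ini_< I$ and $\ini_< J$, giving
\[
\ini_<(I\cap J) \subseteq \ini_< I \cap \ini_< J.
\]
The content of the lemma is that these two inclusions become equalities \emph{simultaneously}.

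Next I would exploit the standard short exact sequence
\[
0 \To S/(I\cap J) \To S/I \dirsum S/J \To S/(I+J) \To 0
\]
(with the obvious maps) to obtain, for every degree $d$, the additive Hilbert function identity
\[
H_{S/(I\cap J)}(d) + H_{S/(I+J)}(d) = H_{S/I}(d) + H_{S/J}(d).
\]
The same identity applied to the four ideals $\ini_< I$, $\ini_< J$, $\ini_< I + \ini_< J$, $\ini_< I \cap \ini_< J$ gives
\[
H_{S/(\ini_< I\cap \ini_< J)}(d) + H_{S/(\ini_< I+\ini_< J)}(d) = H_{S/\ini_< I}(d) + H_{S/\ini_< J}(d).
\]
Now I would invoke the well-known fact that $H_{S/L}=H_{S/\ini_< L}$ for any homogeneous ideal $L$. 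Combining the last two identities yields
\[
H_{S/\ini_<(I\cap J)}(d) + H_{S/\ini_<(I+J)}(d) = H_{S/(\ini_< I \cap \ini_< J)}(d) + H_{S/(\ini_< I + \ini_< J)}(d).
\]

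To close, I would observe that containment of homogeneous ideals reverses Hilbert functions, so the two inclusions recorded at the start translate into
\[
H_{S/\ini_<(I\cap J)}(d)-H_{S/(\ini_< I\cap \ini_< J)}(d)\geq 0, \qquad H_{S/(\ini_< I+\ini_< J)}(d)-H_{S/\ini_<(I+J)}(d)\geq 0,
\]
and the displayed Hilbert function identity forces these two nonnegative quantities to be \emph{equal} in every degree $d$. Therefore one of them vanishes identically in $d$ if and only if the other does; since equality of homogeneous ideals is detected by equality of Hilbert functions when one is contained in the other, this is precisely the equivalence of (a) and (b). The only real work is the bookkeeping in matching inclusions to the correct direction of inequality of Hilbert functions; once that is sorted out, everything is immediate from the two ingredients (additivity of Hilbert functions along the sum/intersection exact sequence, and invariance of the Hilbert function under taking initial ideals).
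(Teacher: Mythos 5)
Your argument is correct and complete: the two unconditional inclusions, the additivity of Hilbert functions along the exact sequence $0 \to S/(I\cap J) \to S/I \oplus S/J \to S/(I+J) \to 0$ applied to both the original ideals and their initial ideals, and the invariance $H_{S/L}=H_{S/\ini_< L}$ together force the two defects to coincide degree by degree, which is exactly the equivalence. The paper does not prove this lemma itself but only cites \cite[Lemma~1.3]{C}, and your Hilbert-function argument is precisely the standard proof given there, so nothing further is needed.
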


The following theorem determines exactly when a binomial edge ideal and its initial ideals have a linear resolution. This follows from \cite[Theorem~1.4]{B} and \cite[Theorem~2.1]{SK}.  

\begin{Theorem}\label{any order}
	{\em{(}see} \cite[Theorem~1.4]{B} {\em{and}} \cite[Theorem~2.1]{SK}{\em{)}}
Let $G$ be a graph with no isolated vertices, and let $<$ be any term order. Then the following conditions are equivalent:
\begin{enumerate}
	\item[{\em(a)}] $J_G$ has a linear resolution, {\em{(}}i.e. 
	$\reg (J_G)=2${\em{)}};
	\item[{\em(b)}] $\ini_< J_G$ has a linear resolution, {{\em{(}}}i.e. 
	$\reg (\ini_<J_G)=2${\em{)}};
	\item[{\em(c)}] $G$ is a complete graph. 
\end{enumerate}
\end{Theorem}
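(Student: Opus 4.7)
The plan is to establish the cycle $(c) \Rightarrow (b) \Rightarrow (a) \Rightarrow (c)$, using the two cited results as black boxes for the two non-trivial implications. Since $G$ has no isolated vertex, it has at least one edge, so $J_G$ is a non-zero ideal generated in degree $2$ and hence $\reg(J_G) \geq 2$. The implication $(b) \Rightarrow (a)$ is immediate from the standard inequality $\reg(I) \leq \reg(\ini_< I)$, valid for every homogeneous ideal $I$ and every term order $<$: if the right-hand side equals $2$, so does the left.

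For $(c) \Rightarrow (b)$ I would invoke \cite[Theorem~1.4]{B} directly. When $G = K_n$, the ideal $J_G$ is the ideal of $2 \times 2$ minors of the generic $(2 \times n)$ matrix, and that theorem says that the initial ideal of this determinantal ideal with respect to \emph{any} term order on $S$ has a linear resolution; hence $\reg(\ini_< J_G) = 2$. For a specific diagonal or lexicographic term order this also follows from the Eagon-Northcott-type resolution of the initial ideal, but \cite{B} provides the statement uniformly in~$<$.

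The substance lies in $(a) \Rightarrow (c)$, which is \cite[Theorem~2.1]{SK}; I would argue by contraposition in two cases. If $G$ is disconnected, write $G = G_1 \sqcup G_2$ on vertex sets $V_1, V_2$, where both components carry an edge (forced by the no-isolated-vertex hypothesis). Let $S_i = \KK[x_j, y_j : j \in V_i]$; since the supporting variable sets are disjoint, $S/J_G \cong (S_1/J_{G_1}) \otimes_\KK (S_2/J_{G_2})$, and a K\"unneth-type identity for regularity of tensor products of standard graded $\KK$-algebras yields
\[
\reg(S/J_G) = \reg(S_1/J_{G_1}) + \reg(S_2/J_{G_2}) \geq 1 + 1 = 2,
\]
i.e.~$\reg(J_G) \geq 3$, contradicting $(a)$. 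If $G$ is connected but non-complete, one picks non-adjacent vertices $i, j$ with a common neighbour $k$ and observes the twin Pl\"ucker-type identities
\[
x_j f_{ik} - x_i f_{jk} = x_k f_{ij}, \qquad y_j f_{ik} - y_i f_{jk} = y_k f_{ij},
\]
which show that $f_{ij} \in (J_G : x_k) \cap (J_G : y_k)$ even though $f_{ij} \notin J_G$. The main obstacle is to convert this strict inclusion of colon ideals into a genuine non-zero minimal Betti number off the linear strand; the careful analysis in \cite{SK} does exactly this, producing a $\beta_{i, j}(J_G) \neq 0$ with $j > i + 2$, and hence $\reg(J_G) \geq 3$, as required.
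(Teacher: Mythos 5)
Your proposal is correct and follows essentially the same route as the paper, which likewise treats this theorem as a consequence of the two cited results, with the only delicate point being that the implication involving an arbitrary term order for the complete graph is supplied by Boocher's theorem on the invariance of graded Betti numbers under passing to initial ideals of generic $2$-minor ideals. The extra details you supply (the regularity additivity argument in the disconnected case and the Pl\"ucker identities) are correct but ultimately you, like the paper, defer the substance of (a) $\Rightarrow$ (c) to \cite[Theorem~2.1]{SK}, which is exactly what the paper intends.
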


We would like to remark that the above theorem was proved in \cite[Theorem~2.1]{SK} for the lexicographic term order. The only part in the proof of that theorem which depends on the term order is that if $G$ is complete, then the desired initial ideal of $J_G$ is generated in degree $2$. But, since $G$ is complete, it follows that $J_G$ is just a determinantal ideal of a $(2\times n)$-generic matrix, and hence by \cite[Theorem~1.4]{B} all the graded Betti numbers of $J_G$ and any of its initial ideals coincide. In particular, all the initial ideals of $J_G$ are generated in degree $2$.      

\medskip
It was shown in \cite{SK1} that any induced subgraph of a graph $G$ provides an algebra retract for $S/J_G$. In particular, the following holds:
 
\begin{Proposition}\label{induced}
	\cite[Proposition~8]{SK1}
	Let $G$ be a graph, and let $H$ be an induced subgraph of $G$. Then $\reg (J_H)\leq \reg (J_G)$. 
\end{Proposition}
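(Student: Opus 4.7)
The plan is to establish the inequality via an algebra retract argument, which is the standard trick for passing regularity (and indeed all graded Betti numbers) from a larger graded algebra down to a combinatorially defined subobject. Concretely, I will exhibit $S_H/J_H$ as an algebra retract of $S/J_G$, where $S_H$ denotes the polynomial subring of $S$ in the variables indexed by $V(H)$, and then quote the general Betti-number comparison for algebra retracts to conclude.

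First, I would set up the two maps. Let $W=V(H)\subseteq[n]$. Define $\iota\colon S_H/J_H\to S/J_G$ on generators by $x_i\mapsto x_i$ and $y_i\mapsto y_i$ for $i\in W$. Since $H$ is an induced subgraph of $G$, every edge of $H$ is an edge of $G$, so every generator $f_{ij}$ of $J_H$ is a generator of $J_G$; hence $J_H\subseteq J_G\cap S_H$ and $\iota$ is a well-defined graded ring homomorphism. Next define $\pi\colon S/J_G\to S_H/J_H$ by sending $x_i,y_i$ to themselves when $i\in W$ and to $0$ when $i\notin W$. The crucial point, and the only place the induced hypothesis enters, is that $\pi$ is well defined: for an edge $\{i,j\}\in E(G)$, if both $i,j\in W$ then $\{i,j\}\in E(H)$ because $H$ is induced and so $\pi(f_{ij})=f_{ij}\in J_H$; if exactly one of $i,j$ lies in $W$ (or neither does) then $\pi(f_{ij})=0$ since one full column of variables is killed.

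Once both maps are in place, $\pi\circ\iota=\mathrm{id}_{S_H/J_H}$ is immediate on generators, so $S_H/J_H$ is a graded algebra retract of $S/J_G$. At this point I would invoke the standard fact that, for a graded algebra retract $A\hookrightarrow B$ of standard graded $\KK$-algebras with common residue field, one has
\[
\beta_{i,j}^A(\KK)\leq \beta_{i,j}^B(\KK)\qquad\text{for all }i,j,
\]
and in particular $\reg(A)\leq\reg(B)$. Translating from algebras back to ideals via $\reg(J_H)=\reg(S_H/J_H)+1$ and the analogous identity for $J_G$, the desired inequality $\reg(J_H)\leq\reg(J_G)$ follows.

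The only genuinely subtle step is verifying that $\pi$ is well defined, and the use of the word \emph{induced} is indispensable there: for a non-induced subgraph one could have $i,j\in W$ with $\{i,j\}\in E(G)\setminus E(H)$, in which case $\pi$ would send $f_{ij}$ to a nonzero element outside $J_H$. The rest of the argument is formal and relies solely on the known Betti-comparison theorem for algebra retracts, so I expect no further obstacle.
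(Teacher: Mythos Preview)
Your proof is correct and follows exactly the approach indicated in the paper: the sentence preceding the proposition states that ``any induced subgraph of a graph $G$ provides an algebra retract for $S/J_G$,'' and the result is then cited from \cite{SK1}. Your explicit construction of $\iota$ and $\pi$ and your observation that the induced hypothesis is precisely what makes $\pi$ well defined match the argument in the cited reference.
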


\medskip
Now we are ready to prove the main result of this section. \\

{\it Proof of Theorem~\ref{reg-join}:}
(a) Let $G:=G_1*G_2$. Since $G$ is not a complete graph, $J_G$ does not have a linear resolution, by Theorem~\ref{any order}. Therefore, $\mathrm{reg}(J_G)\geq 3$. By Proposition~\ref{induced}, $\mathrm{reg}(J_G)\geq\mathrm{reg}(J_{G_1})$ and $\mathrm{reg}(J_G)\geq\mathrm{reg}(J_{G_2})$, since $G_1$ and $G_2$ are both induced subgraphs of $G$. This implies that
\[
\mathrm{reg}(J_G)\geq \mathrm{max}\{\mathrm{reg}(J_{G_1}),\mathrm{reg}(J_{G_2}),3\}.
\]
To verify the inverse inequality, first we consider the case that both of $G_1$ and $G_2$ are disconnected graphs on $n_1$ and $n_2$ vertices, respectively. Let $G_1=\bigsqcup_{i=1}^r G_{1i}$ and $G_2=\bigsqcup_{i=1}^s G_{2i}$ with disjoint sets of vertices
	$V_1=\bigcup_{i=1}^r V_{1i}$ and $V_2=\bigcup_{i=1}^s V_{2i}$,
	respectively, where $r,s\geq 2$. Using Proposition~\ref{both disconnected 1},
	one can decompose $J_G$ as $J_{G}=Q\cap Q'$, where
	\begin{equation}
	Q=\bigcap_{\substack{
			T\in \mathcal{C}(G) \\
			V_1\subseteq T
	}}
	P_T(G)~~,~~Q'=\bigcap_{\substack{
			T\in \mathcal{C}(G) \\
			V_1\nsubseteq T
	}}
	P_T(G).
	\nonumber
	\end{equation}
	Therefore,
	\begin{equation}
	Q=(x_i,y_i:i\in V_1)+\bigcap_{\substack{
			T\in \mathcal{C}(G) \\
			V_1\subseteq T
	}}
	P_{T\setminus V_1}(G_2)
	\nonumber
	\end{equation}
	and
	\begin{equation}
	Q'=P_{\emptyset}(G)\cap\big{(}\bigcap_{\substack{
			\emptyset\neq T\in \mathcal{C}(G) \\
			V_1\nsubseteq T
	}}
	P_T(G)\big{)}
	=P_{\emptyset}(G)\cap \big{(}(x_i,y_i:i\in V_2)+\bigcap_{\substack{
			T\in \mathcal{C}(G) \\
			V_2\subseteq T
	}}
	P_{T\setminus V_2}(G_1)\big{)}.
	\nonumber
	\end{equation}
	Then, it follows that
	\[
	Q=(x_i,y_i:i\in V_1)+J_{G_2}, \quad Q'=J_{K_n}\cap \big{(}(x_i,y_i:i\in V_2)+J_{G_1}\big{)},
	\]
	and hence
	\[
	Q+Q'=(x_i,y_i:i\in V_1)+J_{K_{n_2}}.
	\]
	Thus, we have $\mathrm{reg}(Q)=\mathrm{reg}(J_{G_2})$ and $\mathrm{reg}(Q+Q')+1=\mathrm{reg}(J_{K_{n_2}})+1=3$.
	Now, the short exact sequence
	\[
	0\rightarrow J_G\rightarrow Q\oplus Q'\rightarrow Q+Q'\rightarrow 0
	\]
	implies that
	\[
	\mathrm{reg}(J_G)\leq \mathrm{max}\{\mathrm{reg}(Q),\mathrm{reg}(Q'),\mathrm{reg}(Q+Q')+1\},
	\]
	by \cite[Corollary~18.7]{P}.	
	Similar to the above argument (by a suitable short exact sequence), it follows that
	\[
	\mathrm{reg}(Q')\leq
	\mathrm{max}\{\mathrm{reg}(J_{G_1}),\mathrm{reg}(J_{K_{n_1}})+1=3\},
	\]
	and so that $\mathrm{reg}(J_G)\leq \mathrm{max}\{\mathrm{reg}(J_{G_2}),\mathrm{reg}(J_{G_1}),3\}$.
	
	Next, assume that either $G_1$ or $G_2$ is connected. By adding  isolated vertices $v$ and $w$ to $G_1$ and $G_2$, respectively, two disconnected graphs $G_1'$ and $G_2'$ are obtained. Hence, by the previous case discussed above, it follows that
	\[
	\mathrm{reg}(J_{G_1'*G_2'})\leq \mathrm{max}\{\mathrm{reg}(J_{G_1'}),\mathrm{reg}(J_{G_2'}),3\}.
	\]
	Since $\mathrm{reg}(J_{G_1'})=\mathrm{reg}(J_{G_1})$ and $\mathrm{reg}(J_{G_2'})=\mathrm{reg}(J_{G_2})$, we get  
	\[
	\reg (J_G)\leq \mathrm{reg}(J_{G_1'*G_2'})\leq \mathrm{max}\{\mathrm{reg}(J_{G_1}),\mathrm{reg}(J_{G_2}),3\},
	\]
	where the first inequality for $J_G$ follows by Proposition~\ref{induced}, since $G=G_1*G_2$ is an induced subgraph of $G_1'*G_2'$. \\
	
	(b) We keep using the notation of the proof of part~(a). The ideal $\ini_< J_G$ does not have a linear resolution, by Theorem~\ref{any order}, since $G$ is not complete. Hence, $\mathrm{reg}(\ini_< J_G)\geq 3$. Let $S_p:=\KK[x_i,y_i:i\in V_p]$ for $p=1,2$. Then $(\ini_< J_G)\cap S_p=(\ini_< J_{G_p})S_p$ for $p=1,2$, which implies easily that $S_p/\ini_< J_{G_p}$ is an algebra retract of $S/\ini_< J_G$. Therefore, $\mathrm{reg}(\ini_< J_G)\geq\mathrm{reg}(\ini_< J_{G_p})$ for $p=1,2$, and hence we have 
	\[
	\mathrm{reg}(\ini_< J_G)\geq \mathrm{max}\{\mathrm{reg}(\ini_< J_{G_1}),\mathrm{reg}(\ini_< J_{G_2}),3\}.
	\]
	
	To prove the other inequality, first we assume that $G_1$ and $G_2$ are both disconnected. We show that
	\[
	\ini_<(Q\cap Q')=\ini_<Q\cap \ini_<Q'.
	\]
	For this, by Lemma~\ref{Aldo}, it is enough to show that \[
	\ini_<(Q+Q')=\ini_<Q+\ini_<Q'.
	\]
	It is clear that $\ini_<Q+\ini_<Q'\subseteq \ini_<(Q+Q')$, and we just need to verify the other inclusion. By Lemma~\ref{initial-variables}, we have
	\begin{equation}\label{Q+Q'}
		\ini_<(Q+Q')=(x_i,y_i:i\in V_1)+\ini_< J_{K_{n_2}}
	\end{equation}
	and
	\begin{equation}\label{Q}
	 \ini_<Q=(x_i,y_i:i\in V_1)+\ini_< J_{G_2}.
	\end{equation}
	Since $J_{K_{n_2}}\subseteq Q'$, one has
	$\ini_< J_{K_{n_2}}\subseteq \ini_<Q'$. This together with equations~(\ref{Q+Q'})~and~(\ref{Q}) implies that $\ini_<(Q+Q')\subseteq \ini_<Q+\ini_<Q'$.
	
	Therefore, we can consider the following short exact sequence:
	\[
		0\rightarrow \ini_<J_G=\ini_<Q\cap \ini_<Q'\rightarrow \ini_<Q\oplus \ini_<Q'\rightarrow \ini_<(Q+Q')\rightarrow 0.
	\]
	Therefore,
	\[
	\reg(\ini_<J_G)\leq \mathrm{max}\{\reg(\ini_<Q),\reg(\ini_<Q'),\reg(\ini_<(Q+Q'))+1\}.
	\]
	By equations~(\ref{Q+Q'})~and~(\ref{Q}),
	\[
	\reg (\ini_<(Q+Q'))=\reg (\ini_<J_{K_{n_2}})=2\quad \text{and}\quad
	\reg (\ini_< Q)=\reg (\ini_< J_{G_2}),
	\] respectively. Now, we need to compute $\reg (\ini_< Q')$, so that we use Lemma~\ref{Aldo} again. Namely, we have
	\[
	\ini_< Q'=
	\ini_<(J_{K_n})\cap \ini_<\big{(}(x_i,y_i:i\in V_2)+J_{G_1})\big{)},
	\]
	since
	\[
	\ini_<\big{(}J_{K_n}+((x_i,y_i:i\in V_2)+J_{G_1})\big{)}=
	\ini_<(J_{K_n})+ \ini_<((x_i,y_i:i\in V_2)+J_{G_1}),
	\]
	which is equal to $(x_i,y_i:i\in V_2)+\ini_< J_{K_n}$.
	Then, by taking a suitable short exact sequence (in a similar way as above), it follows that
	\[
	\reg (\ini_< Q')\leq \max \{\reg(\ini_<J_{G_1}),\reg (\ini_< J_{K_n})+1=3\}.
	\]
	Therefore, we get
	\[
	\mathrm{reg}(\ini_< J_{G_1*G_2})=\mathrm{max}\{\mathrm{reg}(\ini_< J_{G_1}),\mathrm{reg}(\ini_< J_{G_2}),3\}.
	\]
	Next, we assume that either $G_1$ or $G_2$ is connected. With the same method as in the proof of part~(a) of the theorem, we provide two new disconnected graphs $G_1'$ and $G_2'$. Then, by the above argument, we have
	\[
	\mathrm{reg}(\ini_< J_{G'_1*G'_2})=\mathrm{max}\{\mathrm{reg}(\ini_< J_{G'_1}),\mathrm{reg}(\ini_< J_{G'_2}),3\}.
	\]
	Let $S'$ be the polynomial ring with variables correspond to the vertices of the graph $G'_1*G'_2$. Then the desired result follows from the fact that $S/\ini_< J_{G_1*G_2}$ is an algebra retract of $S'/\ini_< J_{G'_1*G'_2}$, together with the fact that $\mathrm{reg}(\ini_< J_{G'_i})=\mathrm{reg}(\ini_< J_{G_i})$ for $i=1,2$.  
\qed





\section{Characterization of binomial edge ideals with regularity $3$}\label{reg=3}

In this section our main goal is to characterize binomial edge ideals of regularity~$3$. As it was mentioned in the previous section, the binomial edge ideals with regularity $2$ were characterized in \cite{SK}. It is natural to ask about a combinatorial characterization of binomial edge ideals of higher regularities. In this section, as an application of the main result of Section~\ref{join}, we give such a characterization.  We need the next theorem from graph theory which gives a characterization of $P_k$-free graphs. For this, we first recall some necessary graph theoretical notion. 

By $P_k$ we mean the path graph with $k$ vertices, and by a $P_k$-free graph we mean a graph which has no induced subgraph isomorphic to $P_k$. A \emph{dominating set} of a graph $G$ is a subset $X$ of vertices of $G$ such that every vertex not in $X$ has a neighbor in $X$. A \emph{connected dominating set} of a graph $G$ is a dominating set $X$ for which the induced subgraph $G_X$ of $G$ is connected. A connected dominating set whose all proper subsets are not connected dominating sets is called a \emph{minimal connected dominating set}. 
A connected dominating set of minimum size is called a \emph{minimum connected dominating set}.

\begin{Theorem}\label{Pk-free}
	\cite[Theorem~4]{CS}
	Let $G$ be a connected $P_k$-free graph, with $k\geq 4$, and let $X$ be any minimum connected dominating set of $G$. Then $G_X$ is $P_{k-2}$-free or isomorphic to $P_{k-2}$.
\end{Theorem}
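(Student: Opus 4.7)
The plan is to argue by contradiction: assume $G_X$ contains an induced $P_{k-2}$, say $P = v_1 v_2 \cdots v_{k-2}$, and that $G_X \not\cong P_{k-2}$, and from this I would derive an induced $P_k$ in $G$. Since $V(P) \subseteq X$ with $|V(P)| = k-2$, the non-isomorphism forces $V(P) \subsetneq X$. Because $V(P)$ is itself connected in $G$ and strictly smaller than $X$, the minimality of $|X|$ among connected dominating sets forces $V(P)$ to fail the dominating condition, so there exists $w \in V(G) \setminus V(P)$ with no neighbor in $V(P)$.

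Next, I would use the connectedness of $G_X$ to build an induced ``bridge'' from $V(P)$ to $w$. Since $X$ dominates $V(G)$, either $w \in X \setminus V(P)$ or $w$ has a neighbor $y \in X \setminus V(P)$; in either case there is a vertex of $X \setminus V(P)$ at graph-distance at most $1$ from $w$. Choose a shortest path in $G_X$ from $V(P)$ to this vertex, say $v_i = u_0, u_1, \ldots, u_t$, with $u_1, \ldots, u_t \in X \setminus V(P)$. Shortest-path minimality forces the arc to be induced and forces each $u_j$ with $j \geq 1$ to have no neighbor in $V(P)$; combined with the defining property of $w$, the concatenation $v_1, \ldots, v_i, u_1, \ldots, u_t, w$ (or the mirror starting at $v_{k-2}$) is an induced path in $G$.

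The hard part is forcing this induced path to have length at least $k$. When $v_i$ sits near the middle of $P$ and $t$ is small, neither of the two obvious concatenations reaches $k$ vertices. The remedy is to invoke the full \emph{minimum} hypothesis on $X$ rather than mere minimality under inclusion: a standard argument shows that for each $v \in X$ either $v$ is a cut vertex of $G_X$, or $v$ has a \emph{private} external neighbor $w_v \notin X$ with $N(w_v) \cap X = \{v\}$, for otherwise $X \setminus \{v\}$ would be a strictly smaller connected dominating set. I would apply this to the two endpoints $v_1$ and $v_{k-2}$ of $P$, and combine it with the bridge constructed above (and with $P_k$-freeness applied to intermediate candidate paths) to produce private neighbors $w_1, w_2$ of $v_1, v_{k-2}$ that are non-adjacent to each other and to the interior of $P$. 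Then $w_1 v_1 v_2 \cdots v_{k-2} w_2$ is an induced $P_k$, the desired contradiction. The main obstacle throughout is the simultaneous control of inducedness and length during this splicing, which is where the assumption that $X$ has minimum \emph{size} (and not just is minimal) does the real work.
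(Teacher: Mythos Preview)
First, note that the paper does not prove this theorem at all; it is quoted verbatim from Camby--Schaudt \cite{CS} and used as a black box. So there is no ``paper's own proof'' to compare against, and I can only assess your sketch on its own terms.

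There are genuine gaps. The most serious is in your endgame. After producing private external neighbors $w_1$ of $v_1$ and $w_2$ of $v_{k-2}$ (with $N(w_i)\cap X=\{v_i\}$), you need $w_1w_2\notin E(G)$ to conclude that $w_1 v_1\cdots v_{k-2} w_2$ is an induced $P_k$. Nothing you have written forces this: if $w_1w_2\in E(G)$ you obtain only an induced $C_k$, and an induced $C_k$ is perfectly compatible with $G$ being $P_k$-free (its longest induced subpath is a $P_{k-1}$). You gesture at ``$P_k$-freeness applied to intermediate candidate paths'' and at splicing with the earlier bridge, but no concrete mechanism is supplied, and I do not see one that works in general from what you have set up.

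Second, your dichotomy ``either $v$ is a cut vertex of $G_X$ or $v$ has a private external neighbor'' is correct, but you treat only the private-neighbor branch. If, say, $v_1$ is a cut vertex of $G_X$, there need be no private $w_1$ to append, and one must instead exploit a vertex of $X\setminus V(P)$ lying in the component of $G_X\setminus\{v_1\}$ not containing $v_2$; this case is simply absent from your outline. Finally, in the bridge construction your claim that ``each $u_j$ with $j\ge 1$ has no neighbor in $V(P)$'' is false for $j=1$: choosing a shortest $G_X$-path from $V(P)$ to the target only prevents $u_j$ with $j\ge 2$ from touching $V(P)$, while $u_1$ may well be adjacent to several $v_\ell$'s, so the concatenation $v_1,\ldots,v_i,u_1,\ldots$ need not be induced. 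In short, the strategy is reasonable in spirit, but the sketch does not yet control the key adjacencies, and the omitted cut-vertex case is not a detail.
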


In the following, we denote the complete graph on $t$ vertices and its complementary graph by $K_t$ and $K_t^c$, respectively. Moreover, for a vertex $v$ of a graph $G$, denoted by $N(v)$ we mean the set of all adjacent vertices to $v$ in $G$. We also set $N[v]:=N(v)\cup \{v\}$.  

Note that whenever $J_G=(0)$, namely $G$ consists of isolated vertices, we have $\reg (S/J_G)=0$. In this case, we set $\reg (J_G)=-\infty$. 

\medskip 
The next theorem is the main application of Theorem~\ref{reg-join}:


\begin{Theorem}\label{reg 3}
	Let $G$ be a non-complete graph with $n$ vertices and no isolated vertices. Then 
	$\reg (J_G)=3$ if and only if either 
	\begin{enumerate}
		\item[{\em(a)}] $G=K_r \sqcup K_s$ with $r,s\geq 2$ and $r+s=n$, or
		\item[{\em(b)}] $G=G_1*G_2$ where $G_i$ is a graph with $n_i<n$ vertices such that $n_1+n_2=n$ and $\reg (J_{G_i})\leq 3$ for $i=1,2$.  
	\end{enumerate}
\end{Theorem}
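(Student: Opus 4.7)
The plan is to establish the two implications of the equivalence separately, with the forward direction carrying the bulk of the work.

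\textbf{Sufficiency ($\Leftarrow$).} In case~(a), since $G=K_r\sqcup K_s$ the ideal $J_G=J_{K_r}+J_{K_s}$ lives in disjoint sets of variables, so the tensor-product formula for regularity yields $\reg(S/J_G)=\reg(S_1/J_{K_r})+\reg(S_2/J_{K_s})=1+1=2$, hence $\reg(J_G)=3$. In case~(b), the assumption that $G$ is non-complete forces at least one of $G_1,G_2$ to be non-complete, so Theorem~\ref{reg-join}(a) applies and gives $\reg(J_G)=\max\{\reg(J_{G_1}),\reg(J_{G_2}),3\}=3$.

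\textbf{Necessity ($\Rightarrow$).} Assume $\reg(J_G)=3$. If $G$ contained an induced $P_4$, then Proposition~\ref{induced} combined with the induced-path lower bound $\reg(J_{P_4})\ge 4$ would give $\reg(J_G)\ge 4$, a contradiction; hence $G$ is $P_4$-free. I would then split into two cases. If $G$ is disconnected with components $H_1,\dots,H_r$, the tensor-product formula yields $\reg(J_G)=\sum_{i=1}^r \reg(J_{H_i})-(r-1)$, and since $G$ has no isolated vertices each $\reg(J_{H_i})\ge 2$. The equality $\sum_i \reg(J_{H_i})=r+2$ with each summand at least $2$ forces $r=2$ and $\reg(J_{H_i})=2$ for $i=1,2$; Theorem~\ref{any order} then identifies each $H_i$ with a complete graph, placing $G$ in case~(a).

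If $G$ is connected (and has at least two vertices since $G$ is non-complete with no isolated vertices), I would apply Theorem~\ref{Pk-free} with $k=4$: any minimum connected dominating set $X$ of $G$ satisfies $|X|\in\{1,2\}$. When $|X|=1$, the unique vertex of $X$ is universal in $G$, so $G=K_1*(G-X)$ is already a join. When $|X|=2$, say $X=\{u,v\}$ with $uv\in E(G)$, I would partition $V(G)\setminus X$ as $A=N(u)\setminus N[v]$, $B=N(v)\setminus N[u]$, $C=N(u)\cap N(v)$ and exploit $P_4$-freeness---for instance, $(a,u,v,b)$ with $a\in A$, $b\in B$ would induce a $P_4$ unless $ab\in E(G)$---to refine this tripartition into a bipartition of $V(G)$ whose two parts are completely joined to each other; this reproduces the classical structure theorem for connected cographs on at least two vertices. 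Once $G=G_1*G_2$ is in hand, Theorem~\ref{reg-join}(a) yields $\reg(J_{G_i})\le 3$ for $i=1,2$, placing $G$ in case~(b).

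The main obstacle is the $|X|=2$ sub-case in the connected branch: Theorem~\ref{Pk-free} by itself does not deliver a join decomposition, and one must combine it with a careful $P_4$-free analysis of the tripartition $A\sqcup B\sqcup C$ (or equivalently invoke the cograph structure theorem) to produce the required bipartition of $V(G)$. The other technical ingredient one should be careful about is the tensor-product regularity formula used twice above, which is a standard but non-trivial fact about homogeneous ideals in disjoint variable sets.
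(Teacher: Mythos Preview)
Your proposal is correct and follows essentially the same route as the paper: the disconnected case is settled via the tensor-product regularity identity, and in the connected case one observes $P_4$-freeness, applies Theorem~\ref{Pk-free} with $k=4$ to get a minimum connected dominating set of size $1$ or $2$, and then extracts a join decomposition from the neighborhood tripartition around the dominating edge (the paper carries this out explicitly, arriving at $G=G_{Z\cup W\cup\{u\}}*G_{U\cup\{w\}}$). The only cosmetic differences are that you deduce $\reg(J_{G_i})\le 3$ from Theorem~\ref{reg-join}(a) rather than from Proposition~\ref{induced} as the paper does, and that you recognize the $|X|=2$ analysis as an instance of the cograph structure theorem, which is a slightly cleaner way to package that step.
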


\begin{proof}
First, assume that $G$ is a disconnected graph with the connected components $H_1,\ldots, H_t$. Then $\reg (S/J_G)=\sum_{i=1}^t \reg(S/J_{H_i})$, and hence 
\[
\reg (J_G)=\sum_{i=1}^t \reg(J_{H_i})-t+1.
\] 
Since $G$ does not have any isolated vertices, we have $\reg(J_{H_i})\geq 2$ for all $i=1,\dots,t$. Therefore, it follows that $\reg (J_G)=3$ if and only if $t=2$ and $\reg(J_{H_1})=\reg(J_{H_2})=2$, since $t\geq 2$. Then, Theorem~\ref{any order} implies that $\reg (J_G)=3$ if and only if $G=K_r \sqcup K_s$ with $r,s\geq 2$ and $r+s=n$. 

Next, suppose that $G$ is connected. If $G=G_1*G_2$ satisfies condition~(b), then it immediately follows from Theorem~\ref{reg-join} that $\reg (J_G)=3$. Now, we prove the converse. Suppose that $\reg (J_G)=3$. Then, by Proposition~\ref{induced}, $G$ is $P_4$-free, since $\reg (J_{P_4})=4$. Thus, by Theorem~\ref{Pk-free}, for any minimum connected dominating set $X$ of $G$, $G_X$ is $P_2$-free or isomorphic to $P_2$. Hence, for any minimum connected dominating set $X$ of $G$, $G_X$ is just a vertex or it is just an edge. Note that $G$ has a connected dominating set, because it is connected. Now, let $X$ be a minimum connected dominating set of $G$. If $G_X$ is a vertex, then it follows that $G=K_1*G_{V\setminus \{v\}}$. Since $G_{V\setminus \{v\}}$ is an induced subgraph of $G$, we have $\reg (J_{G_{V\setminus \{v\}}})\leq 3$, and hence $G$ satisfy condition~(b). Now assume that  $G_X$ is an edge, say $\{u,w\}$. Let $U:=N(u)\setminus N[w]$, $W:=N(w)\setminus N[u]$ and $Z:=N(u)\cap N(w)$. If $U=\emptyset$ or $W=\emptyset$, then it follows that $G=K_1*G_{V\setminus w}$ or $G=K_1*G_{V\setminus u}$, and hence the result follows similar to the previous case. Therefore, we assume that $U$ and $W$ are both non-empty. Note that all the vertices in $U$ are adjecent to all the vertices in $W$, since otherwise passing through $u$ and $w$, an induced path isomorphic to $P_4$ exists, which is a contradiction. On the other hand, if there is a vertex $z\in Z$ and vertices in $u_1\in U$ and $w_1\in W$ such that $z$ is not adjacent to neither $u_1$ nor $w_1$, then $z,u,u_1,w_1$ provides an induced path in $G$. But this is a contradiction, because $G$ is $P_4$-free. Therefore, any vertex in $Z$ is adjacent either to all the vertices in $U$ or to all the vertices in $W$. Without loss of generality, we assume that all the vertices of $Z$ and $U$ are adjacent. Then, it follows that $G=G_{Z\cup W\cup \{u\}}*G_{U\cup \{w\}}$. Since both of $G_{Z\cup W\cup \{u\}}$ and $G_{U\cup \{w\}}$ are induced subgraphs of $G$, the regularity of their binomial edge ideals is 
$\leq 3$, and hence $G$ satisfies condition~(b). Therefore, we get the  desired result.                   
\end{proof}

The above theorem recovers \cite[Proposition~3.1]{Heroli} in the special case of closed graphs. Note that by Theorem~\ref{reg 3}, the graphs $K_r \sqcup K_s$ with $r,s\geq 2$ are the only disconnected graphs without isolated vertices whose binomial edge ideals have regularity~$3$. Moreover, either $G_1$ or $G_2$ in part~(b) of Theorem~\ref{reg 3}, could be disconnected, even just a bunch of isolated vertices. Also, note that it is clear that the regularity of the binomial edge ideal of a graph is not changed by adding isolated vertices. 

\medskip 
The graphs $G$ with $\reg (J_G)=3$ which were described in Theorem~\ref{reg 3} can be constructed recursively. Indeed, given a positive $n$, let 
\[
\mathcal{G}(n):=\{G:|V(G)|=n, \reg(J_G)\leq 3\}.
\]
First note that, by Theorem~\ref{any order}, the only graphs $G$ in $\mathcal{G}(n)$ with $\reg (J_G)<3$ are of the form $K_r \sqcup K_t^c$ with $r+t=n$ and $r\geq 1$ and $t\geq 0$. Now, let $G\in \mathcal{G}(n)$. By Theorem~\ref{reg 3}, it follows that either $G=K_r \sqcup K_s\sqcup K_t^c$ with $r,s\geq 2$ and $r+s+t=n$ for some $t\geq 0$, or $G=(G_1*G_2)\sqcup K_t^c$ where $t\geq 0$ and $G_i\in \mathcal{G}(n_i)$ for $i=1,2$ such that $n_i<n$ and $n_1+n_2+t=n$. In the latter case, since $G_i\in \mathcal{G}(n_i)$, one can apply again Theorem~\ref{reg 3} for $G_1$ and $G_2$. By proceeding in this way, after a finite number of steps, we obtain graphs of the form $K_r \sqcup K_s\sqcup K_t^c$ where $r\geq 1$ and $s,t\geq 0$. So, roughly speaking, these graphs could be seen as building blocks of the elements of $\mathcal{G}(n)$. 

It is clear that the smallest graph whose binomial edge ideal has regularity~$3$ is $P_3$, and it is in fact the only graph on three vertices with this property. Now, as an example, let us apply the above construction for $n=4$. Indeed, applying the above construction implies that the only graphs $G$ on four vertices with $\reg (J_G)=3$ are the following six graphs: $2K_2$, $K_2^c*K_2^c$ (i.e. the $4$-cycle), $K_2^c*K_2$, $(K_2\sqcup K_1)*K_1$, $K_3^c*K_1$ (i.e. the star graph $K_{1,3}$), and $K_1\sqcup (K_1*K_2^c)$.    
 
Note that \emph{threshold graphs}, a well-known large class of graphs, provide a special class of graphs constructed as above. We are grateful to Asghar~Bahmani who pointed out this nice class of graphs to us.  

\medskip 
The next corollary gives a partial positive answer to a conjecture, due to Ene, Herzog and Hibi, posed in \cite[page~68]{EHH}. This conjecture says that the extremal Betti numbers of $J_G$ and $\ini_{<_{lex}}J_G$, and in particular their regularities, coincide. Recall that Theorem~\ref{any order} implies that $\reg (J_G)=2$ if and only if $\reg (\ini_< J_G)=2$ for any term order $<$. In the next section, we make further comments on this conjecture.  

\begin{Corollary}\label{initial reg 3}
	Let $G$ be a graph, and let $<$ be any term order on $S$. Then $\reg (J_G)=3$ if and only if $\reg (\ini_< J_G)=3$. In particular, if $\reg (J_G)=3$, then $\ini_< J_G$ is generated in degree at most~$3$.  
\end{Corollary}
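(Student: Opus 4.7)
The proof splits into the two implications $\reg J_G = 3 \Rightarrow \reg \ini_< J_G = 3$ and $\reg \ini_< J_G = 3 \Rightarrow \reg J_G = 3$. Since isolated vertices add polynomial variables on which neither ideal depends, I may assume throughout that $G$ has no isolated vertices.

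The implication $\reg \ini_< J_G = 3 \Rightarrow \reg J_G = 3$ is the easier one. From the standard inequality $\reg J_G \leq \reg \ini_< J_G = 3$ and the trivial lower bound $\reg J_G \geq 2$ (since $J_G$ is generated in degree two), it suffices to exclude $\reg J_G = 2$. But $\reg J_G = 2$ would force $G$ to be complete by Theorem~\ref{any order}, and that same theorem would then give $\reg \ini_< J_G = 2$, contradicting the hypothesis.

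The reverse implication I would prove by strong induction on $n = |V(G)|$. Suppose $\reg J_G = 3$; by Theorem~\ref{reg 3}, either (a) $G = K_r \sqcup K_s$ with $r,s \geq 2$, or (b) $G = G_1 * G_2$ with each $G_i$ on strictly fewer than $n$ vertices and $\reg J_{G_i} \leq 3$. In case (a), the ideals $\ini_< J_{K_r}$ and $\ini_< J_{K_s}$ live in disjoint sets of variables, and writing $S = S_1 \tensor_\KK S_2$ along the corresponding partition one obtains
\[
S/\ini_< J_G \iso (S_1/\ini_< J_{K_r}) \tensor_\KK (S_2/\ini_< J_{K_s}).
\]
By Theorem~\ref{any order} each factor has regularity one, and regularity is additive under tensor products over a field, so $\reg \ini_< J_G = 3$. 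In case (b), for each $i$ I split: if $\reg J_{G_i} \leq 2$ then $G_i$ is complete and $\reg \ini_< J_{G_i} \leq 2$ by Theorem~\ref{any order}; if $\reg J_{G_i} = 3$, the inductive hypothesis gives $\reg \ini_< J_{G_i} = 3$. In either case $\reg \ini_< J_{G_i} \leq 3$, and Theorem~\ref{reg-join}(b) then yields
\[
\reg \ini_< J_G = \max\{\reg \ini_< J_{G_1}, \reg \ini_< J_{G_2}, 3\} = 3.
\]

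The step most in need of care is the inductive use of case (b): the argument hinges on the precise parallelism between parts (a) and (b) of Theorem~\ref{reg-join}, which is exactly what makes Theorem~\ref{reg 3} transportable to the initial-ideal side. Once the induction is set up so that the hypothesis yields $\reg \ini_< J_{G_i} \leq 3$ for both $i$ (with the ``$=2$'' subcase handled separately by Theorem~\ref{any order}), the conclusion is immediate. Finally, the ``in particular'' clause follows from the general fact that the top degree of a minimal generator of a graded ideal $I$ is bounded by $\reg I$.
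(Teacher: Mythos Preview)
Your proof is correct and follows essentially the same route as the paper's: the backward implication via the standard inequality $\reg J_G \leq \reg \ini_< J_G$ together with Theorem~\ref{any order}, and the forward implication by induction on $|V(G)|$ using the dichotomy of Theorem~\ref{reg 3} and then Theorem~\ref{reg-join}(b). Your write-up is a bit more explicit about the inductive step (separating the $\reg J_{G_i}\leq 2$ subcase), but the argument is the same; one tiny imprecision is that in case~(b) the pieces $G_i$ may carry isolated vertices, so ``$G_i$ is complete'' should read ``$G_i$ is complete up to isolated vertices,'' which does not affect the conclusion $\reg \ini_< J_{G_i}\leq 2$.
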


\begin{proof}
	If $\reg (\ini_< J_G)=3$, then $\reg (J_G)\leq 3$, by 
	\cite[Corollary~3.3.4]{HH}. Hence, we deduce that $\reg (J_G)=3$, by Theorem~\ref{any order}. Conversely, suppose that $\reg (J_G)=3$. We may assume that $G$ has no isolated vertices. If $G=K_r\sqcup K_s$, for $r,s\geq 2$, then $\ini_< J_G=\ini_< J_{K_r}+\ini_< J_{K_s}$, and hence $\reg (\ini_< J_G)=3$, since $K_r$ and $K_s$ are on disjoint sets of vertices, and $\reg (\ini_< J_{K_r})=\reg (\ini_< J_{K_s})=2$. Now, let $G$ satisfy condition~(b) of Theorem~\ref{reg 3}. Then it is enough to use induction on the number of vertices. Hence, the result follows from Theorem~\ref{reg-join} and Theorem~\ref{reg 3}. 
\end{proof}

Note that by Theorem~\ref{reg 3} the characterization of binomial edge ideals of regularity~$3$, as well as regularity~$2$, is indeed independent of the characteristic of the field $\KK$. 

\medskip
After having characterized binomial edge ideals with regularity~$2$~and~$3$, now it is natural to ask about higher regularities. In general, the regularity of $J_G$ is bounded above by $n$ and this bound is attained if and only if $G$ is the path graph on $n$ vertices, see \cite[Theorem~3.2]{KS2} and \cite[Theorem~1.1]{MM}. Now, one may ask the following question: {\em Given positive integers $n$ and $t$ with $3\leq t<n$, does there exist a connected graph $G$ with $n$ vertices and $\reg (J_G)=t$?}

Applying Theorem~\ref{reg-join}~(a), it follows that the above question has a positive answer. Indeed, let $G_1$ be any graph with $n_1$ vertices and $\reg (J_{G_1})=t$, and let $G_2$ be any graph with $n_2$ vertices and $\reg (J_{G_2})\leq t$, where $n_1,n_2>0$ and $n_1+n_2=n$. Then, $G_1*G_2$ has $n$ vertices and Theorem~\ref{reg 3} implies that $\reg (J_{G_1*G_2})=t$. In particular, $G_1$ could be $P_t$ and $G_2$ could be any graph with $n-t$ vertices with $\reg (J_{G_2})\leq t$, for example $G_2$ could be just $K_{n-t}^c$. 



\medskip 
In the rest of this section, our aim is to characterize all binomial edge ideals which are extremal Gorenstein, as a consequence of Theorem~\ref{reg 3}. First we recall the definition. Note that if $I$ is a graded ideal in a polynomial ring $R$ over a field such that $R/I$ is Gorenstein, then $I$ can never have a linear resolution unless $I$ is a principal ideal. However, if the minimal graded free resolution of $I$ is as linear as possible, then $I$ is said to be 
\emph{extremal Gorenstein}. In particular, in the case that $I$ is a graded ideal generated in degree $2$, it is extremal Gorenstein if $R/I$ is Gorenstein and $\reg (I)=3$.    

Now, using Theorem~\ref{reg-join}~(a), we characterize all Cohen-Macaulay binomial edge ideals with regularity $3$. Note that all  binomial edge ideals with regularity~$2$ are Cohen-Macaulay, since by Theorem~\ref{any order}, they are just determinantal ideals. In the proof of the following proposition, we use the classical notion of $\ell$-connected-ness of a graph; given a positive integer $\ell$, a connected graph $G$ on at least $\ell + 1$ vertices  is called $\ell$-\emph{connected}, if the induced subgraph obtained by deleting any subset of vertices of cardinality less than $\ell$ from $G$ is a connected graph as well. It follows obviously from the definition that if $G$ is an $(\ell + 1)$-connected graph, then it is also 
$\ell$-connected.         

\begin{Proposition}\label{CM reg 3}
Let $G$ be a graph on the vertex set $[n]$ which has no isolated vertices. Then the following statements are equivalent:
	\begin{enumerate}
		\item[{\em(a)}] $S/J_G$ is Cohen-Macaulay and $\reg (J_G)=3$;
		\item[{\em(b)}] $G=K_r\sqcup K_s$ with $r,s\geq 2$ and $r+s=n$, or $G=K_1*(K_r\sqcup K_s)$ with $r,s\geq 1$ and $r+s=n-1$. 
	\end{enumerate}
\end{Proposition}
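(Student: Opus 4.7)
My plan is to prove both implications.

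For (b)$\Rightarrow$(a): in the disjoint-union case $G = K_r \sqcup K_s$, the ideal $J_G = J_{K_r} + J_{K_s}$ lies in two disjoint sets of variables, so $S/J_G$ is the tensor product of two Cohen--Macaulay determinantal rings and is itself Cohen--Macaulay, while $\reg(J_G) = 3$ follows from the disjoint-union formula together with Theorem~\ref{any order}. In the join case $G = K_1 * (K_r \sqcup K_s)$, Theorem~\ref{reg-join} gives $\reg(J_G) = 3$, and $G$ is realized as the union of $K_{r+1}$ and $K_{s+1}$ sharing only the $K_1$-vertex (its unique cut vertex); since both $J_{K_{r+1}}$ and $J_{K_{s+1}}$ are Cohen--Macaulay, the standard gluing principle for binomial edge ideals at a cut vertex yields that $S/J_G$ is Cohen--Macaulay.

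For (a)$\Rightarrow$(b): by Theorem~\ref{reg 3}, either $G = K_r \sqcup K_s$ with $r,s \geq 2$ (already in (b)) or $G = G_1 * G_2$ is connected and non-complete with $\reg(J_{G_i}) \leq 3$. To show that one factor is a single vertex I exploit the $P_4$-freeness of $G$ (as $\reg(J_G) = 3$, $G$ is a cograph) and its cotree decomposition $G = H_1 * \cdots * H_k$, in which each $H_i$ is either a single vertex or a disconnected cograph and at least one $H_i$ (say $H_1$) is disconnected. Set $W := V(G) \setminus V(H_1)$. If $|W| \geq 2$, then each $w \in W$ is a cut vertex of $\{w\} * H_1$, so $W \in \mathcal{C}(G)$; moreover $P_\emptyset(G) + P_W(G) = (x_i, y_i : i \in W) + J_{K_n}$, whose quotient is the determinantal ring on the $V(H_1)$-variables, of depth $|V(H_1)| + 1$, strictly less than $n+1 = \dim S/J_G$. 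Feeding this into the long exact sequence in local cohomology arising from
\[
0 \to S/(P_\emptyset \cap P_W) \to S/P_\emptyset \oplus S/P_W \to S/(P_\emptyset + P_W) \to 0
\]
produces a nonzero class in $H^{|V(H_1)|+2}_{\mathfrak{m}}(S/(P_\emptyset \cap P_W))$, and combined with unmixedness of $J_G$ this forces $\depth(S/J_G) < \dim(S/J_G)$, contradicting Cohen--Macaulayness. Hence $|W| \leq 1$, and since $G$ is connected, $G = K_1 * H_1$ with $H_1$ disconnected.

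Writing $H_1 = C_1 \sqcup \cdots \sqcup C_k$, the $K_1$-vertex $v$ is a cut vertex of $G$, so $\{v\} \in \mathcal{C}(G)$ and unmixedness gives $\dim S/P_{\{v\}}(G) = n - 1 + k = n+1$; hence $k = 2$. Realizing $G$ as the union of $K_1 * C_1$ and $K_1 * C_2$ along $v$, the gluing principle forces each $J_{K_1 * C_i}$ to be Cohen--Macaulay; if some $C_i$ were not complete, then $K_1 * C_i$ would be a connected $P_4$-free graph without a cut vertex, and applying the cotree/local-cohomology argument above to $K_1 * C_i$ (with a disconnected cotree-child playing the role of $H_1$) would produce a set $W'$ of size at least $2$ violating Cohen--Macaulayness, a contradiction. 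Hence every $C_i$ is complete and $G = K_1 * (K_{r_1} \sqcup K_{r_2})$. The step I expect to be most delicate is the passage from failure of Cohen--Macaulayness of $S/(P_\emptyset \cap P_W)$ to that of $S/J_G$ when $\mathcal{C}(G) \supsetneq \{\emptyset, W\}$: this requires a refined short exact sequence that includes the intersection with the remaining minimal primes and uses the unmixedness enforced by Cohen--Macaulayness to control the additional local-cohomology contributions.
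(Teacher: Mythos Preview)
Your direction (b)$\Rightarrow$(a) is fine and essentially parallel to the paper; the paper invokes the Cohen--Macaulay characterization of block graphs from \cite{EHH} rather than a cut-vertex gluing, but these amount to the same thing here.

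The direction (a)$\Rightarrow$(b), however, has a genuine gap exactly where you flag it. From the Mayer--Vietoris sequence for $P_\emptyset$ and $P_W$ you correctly obtain $H^{|V(H_1)|+2}_{\mathfrak m}\big(S/(P_\emptyset\cap P_W)\big)\neq 0$ when $|W|\geq 2$. But $J_G$ is in general a strictly smaller intersection than $P_\emptyset\cap P_W$, and you do not show that this nonvanishing local cohomology class survives in $S/J_G$. The sentence ``this requires a refined short exact sequence that includes the intersection with the remaining minimal primes and uses unmixedness to control the additional local-cohomology contributions'' is a hope, not an argument: you would need to understand $(P_\emptyset\cap P_W)+\bigcap_{T\neq\emptyset,W}P_T(G)$ well enough to bound its depth from below, and nothing in your proposal does this. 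Without that step, the implication ``$|W|\geq 2$ forces $S/J_G$ not Cohen--Macaulay'' is unproved, and the same gap recurs in your recursive treatment of $K_1*C_i$.

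The paper's proof bypasses this entirely. It observes that $G=G_1*G_2$ with $|V(G_1)|=n_1\leq n_2$ is $n_1$-connected, and then quotes \cite[Proposition~3.10]{BN}, which says that a non-complete graph with $S/J_G$ Cohen--Macaulay has vertex connectivity at most~$1$; this immediately forces $n_1=1$. The remaining analysis (that $G_2$ is disconnected with exactly two components, both complete) is then handled by recognizing $G$ as a block graph and invoking \cite[Theorem~1.1]{EHH}. So the missing ingredient in your argument is precisely the Banerjee--N\'u\~nez-Betancourt connectivity bound; you are in effect trying to reprove a special case of it by hand, and the hard part of that reproof is the step you left open.
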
   

\begin{proof}
If $G=K_r\sqcup K_s$ with $r,s\geq 2$ and $r+s=n$, then conditions in (a) follows, by the fact that $J_{K_r}$ and $J_{K_s}$ are determinantal ideals and by Theorem~\ref{reg 3}. If $G=K_1*(K_r\sqcup K_s)$ with $r,s\geq 1$ and $r+s=n-1$, then $\reg (J_G)=3$ by Theorem~\ref{reg 3}. In this case, $G$ is clearly a block graph (with blocks $K_{r+1}$ and $K_{s+1}$). Then by \cite[Theorem~1.1]{EHH}, it follows that $S/J_G$ is Cohen-Macaulay. 

Conversely, assume that $S/J_G$ is Cohen-Macaulay and 
$\reg (J_G)=3$. Therefore, $G$ is not a complete graph by Theoem~\ref{any order}. If $G$ is disconnected, then Theorem~\ref{reg 3} implies that $G=K_r\sqcup K_s$ for some $r,s\geq 2$ such that $r+s=n$. Now suppose that $G$ is connected. Then, by Theorem~\ref{reg 3} it follows that $G=G_1*G_2$ with $\reg (J_{G_1}),\reg (J_{G_2})\leq 3$, where $G_1$ and $G_2$ are graphs on $n_1$ and $n_2$ vertices, respectively, such that $n_1+n_2=n$ and $1\leq n_1,n_2<n$. Without loss of generality, we assume that $n_1\leq n_2$. Then it can be easily seen that $G$ is $n_1$-connected. Therefore, by \cite[Proposition~3.10]{BN}, we deduce that $n_1=1$ and hence $G_1=K_1$, since $S/J_G$ is Cohen-Macaulay. This implies that $n_2\geq 2$, because $G$ is not complete. If $G_2$ is connected, then it follows that  $G=G_1*G_2$ is $2$-connected. Thus, $G_2$ has to be disconnected, and hence $G_2=K_r\sqcup K_s\sqcup K_t^c$ where $r,s\geq 1$ and $t\geq 0$, by Theorem~\ref{any order} and Theorem~\ref{reg 3}. Therefore, $G$ is a block graph (with blocks $K_{r+1}$, $K_{s+1}$, and $t$ copies of $K_2$ if $t\geq 1$). Then, again by using \cite[Theorem~1.1]{EHH}, we deduce that $t=0$, or equivalently $G$ has only two blocks $K_{r+1}$ and $K_{s+1}$, because $S/J_G$ is Cohen-Macaulay. Therefore, $G=K_1*(K_r\sqcup K_s)$ with $r,s\geq 1$ and $r+s=n-1$.              	
\end{proof}

Finally, we give the characterization of extremal Gorenstein binomial edge ideals. Indeed, binomial edge ideals rarely admit this property. 

\begin{Corollary}\label{Gor reg 3}
Let $G$ be a graph with no isolated vertices. Then the following statements are equivalent: 
\begin{enumerate}
	\item[{\em(a)}] $J_G$ is extremal Gorenstein;
	\item[{\em(b)}] $G=2K_2$ or $G=P_3$. 
\end{enumerate}
\end{Corollary}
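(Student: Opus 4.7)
The plan is to reduce to the classification in Proposition \ref{CM reg 3} and then cull the two resulting families by imposing the Gorenstein condition. Since $J_G$ is generated in degree $2$ and cannot be principal when $\reg(J_G)=3$, being extremal Gorenstein is equivalent to $S/J_G$ being Gorenstein together with $\reg(J_G)=3$. Because Gorenstein implies Cohen-Macaulay, Proposition \ref{CM reg 3} leaves only two families to inspect: $G=K_r\sqcup K_s$ with $r,s\geq 2$, or $G=K_1*(K_r\sqcup K_s)$ with $r,s\geq 1$.

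For the disconnected family, I would exploit the tensor decomposition $S/J_G\cong (S_1/J_{K_r})\otimes_\KK (S_2/J_{K_s})$ over disjoint variable sets. A tensor product of positively graded $\KK$-algebras is Gorenstein if and only if each factor is, and it is a classical fact that the $2\times n$ generic determinantal ring $S'/J_{K_n}$ is Gorenstein only for $n\leq 2$. Combined with $r,s\geq 2$, this forces $r=s=2$, giving $G=2K_2$; here $J_G$ is visibly a complete intersection of two binomials in disjoint variables, so $S/J_G$ is Gorenstein directly, and $\reg(J_G)=3$ by Theorem \ref{reg 3}.

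For the connected family $G=K_1*(K_r\sqcup K_s)$, the candidate $r=s=1$ yields $G=P_3$, whose binomial edge ideal $J_{P_3}=(f_{12},f_{23})$ is a complete intersection (two degree-$2$ binomials cutting out a variety of dimension $4$ in the $6$-variable ring $S$), hence $S/J_{P_3}$ is Gorenstein, and $\reg(J_{P_3})=3$ by Theorem \ref{reg 3}. To exclude every other pair $(r,s)\neq(1,1)$, I would appeal to the Gorenstein characterization for Cohen-Macaulay block graphs available in the literature (e.g.\ in \cite{EHH} together with the results of \cite{RR}), which in particular forces every block of $G$ to be a $K_2$, i.e.\ $G$ to be a path. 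Since the blocks of $K_1*(K_r\sqcup K_s)$ are $K_{r+1}$ and $K_{s+1}$, this imposes $r=s=1$.

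The main obstacle is precisely this last step, namely cleanly excluding $(r,s)\neq(1,1)$. If the block-graph Gorenstein classification cannot be quoted in exactly the form needed, a self-contained alternative would be to compute the Cohen-Macaulay type of $S/J_G$ from the minimal primary decomposition $J_G=P_{\emptyset}(G)\cap P_{\{v\}}(G)$, where $v$ is the cut vertex, together with the associated Mayer-Vietoris short exact sequence. Using the explicit canonical modules of the determinantal ring $S/J_{K_n}$ and of $S/(x_v,y_v,J_{K_r},J_{K_s})$, one should be able to show that the type of $S/J_G$ strictly exceeds $1$ as soon as $r+s\geq 3$, completing the argument.
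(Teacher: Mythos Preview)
Your proof follows essentially the same strategy as the paper's: reduce via Proposition~\ref{CM reg 3} to the two families and then impose the Gorenstein condition on each. For the disconnected family the paper, like you, uses the tensor product structure, but phrases it as a last-Betti-number computation via the Eagon--Northcott complex rather than invoking Gorensteinness of the $2\times n$ determinantal ring; these are equivalent.

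For the connected family, however, the paper avoids your ``main obstacle'' entirely by observing that $K_1*(K_r\sqcup K_s)$ is a \emph{closed} graph (two maximal cliques $K_{r+1},K_{s+1}$ meeting in a single vertex), so that \cite[Corollary~3.4]{EHH} applies directly and yields $r=s=1$. Your citation to a block-graph Gorenstein classification in \cite{EHH,RR} is not quite what those references provide, and your proposed Mayer--Vietoris workaround, while feasible, is unnecessary once you recognize the graph is closed.
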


\begin{proof}
It is enough to determine for which graphs $G$ of the forms in part~(b) of Proposition~\ref{CM reg 3}, $S/J_G$ is Gorenstein. 
If $G=K_r\sqcup K_s$ with $r,s\geq 2$, then $S/J_G$ is Gorenstein if and only if $r=s=2$. Indeed, the minimal graded free resolution of $S/J_{K_r\sqcup K_s}$ is just the tensor product of the ones of $S'/J_{K_r}$ and $S''/J_{K_r}$ where $S'$ and $S''$ are suitable polynomial rings over $\KK$, and $S'/J_{K_r}$ and $S''/J_{K_s}$ for $r,s\geq 2$ are resolved by the Eagon-Northcott complex. This implies that the last Betti number of $J_{K_r\sqcup K_s}$ for $r,s\geq 2$ is equal to~$1$ if and only if $r=s=2$.  
If $G=K_1*(K_r\sqcup K_s)$ with $r,s\geq 1$, then $G$ is a closed graph with maximal cliques $K_{r+1}$ and $K_{s+1}$ which intersect in one vertex. Then by \cite[Corollary~3.4]{EHH}, it follows that $S/J_G$ is Gorenstein if and only if $r=s=1$, and hence $G=P_3$. 
\end{proof}

\section{Further remarks on related problems}\label{further}


In this section we would like to make some remarks on some nice conjectures and questions concerning the regularity of binomial edge ideals. Indeed, by Theorem~\ref{reg-join} we get some negative and positive answers to those conjectures. 

We start with disproving a conjecture by Chaudhry, Dokuyucu and Irfan in \cite{CDI}. Let us first recall the notion of \emph{weakly closed} graphs, as a generalization of closed graphs. Let $G$ be a graph with the vertex set $[n]$ and edge set $E$, which admits a labeling of vertices with the following property: for all $i,j\in [n]$ with $j>i+1$ and $\{i,j\}\in E$, and for all $i<k<j$, one has either 
$\{i,k\}\in E$ or $\{j,k\}\in E$. Then $G$ is called a weakly closed graph. Weakly closed graphs were introduced in \cite{M}. Any closed graph is weakly closed. 

For any graph $G$, let $\ell(G)$ denote the length of the longest induced path in $G$. As we recalled in the introduction, if $G$ is a connected graph, then $\reg (J_G)\geq \ell(G) +1$, by Proposition~\ref{induced}. The following conjecture appeared in \cite{CDI}:

\begin{Conjecture}\label{weakly closed}
	If $G$ is a connected weakly closed graph, then 
	$\reg(J_G)=\ell(G) +1$. 
\end{Conjecture}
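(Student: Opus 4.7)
The plan is to \emph{disprove} Conjecture~\ref{weakly closed} by using Theorem~\ref{reg-join} to construct an explicit family of connected weakly closed graphs whose regularity exceeds $\ell(G)+1$ by an arbitrarily large amount. The guiding idea is a structural asymmetry between the two invariants under the join operation: $\reg(J_{G_1*G_2})$ inherits the regularities of the two factors, whereas $\ell(G_1*G_2)\leq \max\{\ell(G_1),\ell(G_2),2\}$, because any induced $P_k$ with $k\geq 4$ in $G_1*G_2$ must lie entirely in $V_1$ or in $V_2$---any two non-consecutive vertices of such a path split between $V_1$ and $V_2$ would be simultaneously adjacent (in the join) and non-adjacent (along the induced path), a contradiction. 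Thus joining a high-regularity disconnected graph with a single apex vertex should blow up $\reg$ while keeping $\ell$ equal to $2$.

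Concretely, I would set $G_q:=(qK_2)*K_1$ for $q\geq 3$, labeled so that the $i$-th disjoint edge is $\{2i-1,2i\}$ and the apex vertex is $2q+1$. The first step is to verify weak closure: every edge $\{i,j\}$ with $j>i+1$ is necessarily of the form $\{i,2q+1\}$, and for each $i<k<2q+1$ the edge $\{k,2q+1\}$ is present (the apex is adjacent to every vertex), so the weak-closure clause is satisfied automatically; the remaining edges $\{2i-1,2i\}$ require no check. The second step is to pin down $\ell(G_q)=2$: the induced path $(2i-1)\text{--}(2q+1)\text{--}(2j-1)$ with $i\neq j$ shows $\ell(G_q)\geq 2$, and the join-structural observation above rules out any induced $P_k$ with $k\geq 4$, since $qK_2$ has $\ell=1$ and $K_1$ none.

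The third step is the regularity computation. Using additivity of $\reg(S/J_H)$ on connected components, one obtains $\reg(J_{qK_2})=2q-q+1=q+1$. Theorem~\ref{reg-join}(a) then yields $\reg(J_{G_q})=\max\{q+1,\reg(J_{K_1}),3\}=q+1$ for $q\geq 3$. Combined with $\ell(G_q)+1=3$, this gives connected weakly closed counter-examples to Conjecture~\ref{weakly closed} satisfying $\reg(J_{G_q})/(\ell(G_q)+1)=(q+1)/3\to\infty$ as $q\to\infty$. I expect the only substantive point to be the join-path structural argument in step two; once that is granted, both the weak-closure verification and the regularity computation reduce to immediate applications of Theorem~\ref{reg-join} and the disconnected-graph regularity formula.
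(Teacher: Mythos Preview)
Your disproof is correct and follows essentially the same strategy as the paper: build a counter-example as $K_1 * H$ with $H$ a disjoint union of paths, read off $\reg(J_G)$ from Theorem~\ref{reg-join}(a) together with the additivity of $\reg(S/J_H)$ over components, and observe that $\ell(G)$ stays bounded in the join while the regularity grows.

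The only difference is cosmetic but worth noting. The paper takes $H=\bigsqcup_{i=1}^{q}P_{t_i}$ with each $t_i\geq 3$, obtaining $\ell(G)=t_q-1$ and $\reg(J_G)=\sum t_i-q+1$; it then specializes to $t_i=q^2$ to force the ratio $\reg(J_{G_q})/(\ell(G_q)+1)\to\infty$. You instead take $H=qK_2$, which pins $\ell(G_q)=2$ at the smallest nontrivial value and produces the friendship graphs $F_q$, a very concrete family. Your computation $\reg(J_{qK_2})=q+1$ and $\reg(J_{G_q})=q+1$ for $q\geq 3$ is correct, and your structural argument that an induced $P_k$ with $k\geq 4$ in a join must lie entirely on one side is both correct and slightly more carefully stated than the paper's bare assertion that $\ell(G_1*G_2)=\max\{\ell(G_1),\ell(G_2)\}$ (which, strictly speaking, should include a $2$ in the maximum). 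Both constructions rely on the same mechanism; yours is a tidier minimal instance of it.
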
  

Now, we give a family of counter-examples to this conjecture. First note that it is easily observed that the join of two weakly closed graphs $G_1$ and $G_2$ is a connected weakly closed graph. On the other hand, it is clear that $\ell(G)=\max \{\ell(G_1),\ell(G_2)\}$. Now, using these facts together with Theorem~\ref{reg-join}, one can construct various counter-examples. Here, we give the following family of graphs as an example: Let $H_1=K_1$, and let $q\geq 2$ be a positive integer and $H_2=\bigsqcup_{i=1}^q P_{t_i}$, (i.e. a disjoint union of $q$ paths), with $3\leq t_1\leq \cdots \leq t_q$. Since any path graph is a closed graph, it is also weakly closed. Therefore, as we mentioned above, $G=H_1*H_2$ is weakly closed and $\ell(G)=t_q-1$. However, Theorem~\ref{reg-join}~(a) implies that 
$\reg (J_{G})=\sum_{i=1}^{q}t_i-q+1$, which yields that  
$\reg (J_{G})>t_q-1$.  Furthermore, by constructing a sequence of graphs 
$(G_q)_{q\geq 2}$, as above, where 
$G_q=K_1*(\bigsqcup_{i=1}^q P_{t_i})$ and $t_1=\cdots =t_q=q^2$, it follows that 
\[
\lim_{q\rightarrow \infty} \frac{\reg (J_{G_q})}{\ell(G_q)+1}=\infty.
\]      
This shows that the difference between $\reg (J_{G})$ and $\ell (G)$ could be big enough in connected graphs.  

\medskip 
Besides Conjecture~\ref{weakly closed}, the more general problem that for which graphs the equality to $\ell(G) +1$ is attained, was discussed in \cite{CDI}. This is indeed a reasonable question. By Theorem~\ref{reg-join}~(a), it follows that the class of such graphs is \emph{join-closed}, namely if the regularities of $J_{G_1}$ and $J_{G_2}$ attain the lower bounds $\ell(G_1)$ and $\ell (G_2)$, respectively, then $J_{G_1*G_2}$ do. This follows since the longest induced path of $G_1*G_2$ is equal to the maximum of the $G_1$ and $G_2$, as we mentioned above. In \cite{EZ}, it was shown that connected closed graphs belong to this class. Moreover, in \cite{CDI}, the so-called $C_{\ell}$ graphs were given as another examples of such graphs.

Next, we deal with the conjecture which we pointed out in Section~\ref{reg=3}. This conjecture is due to Ene, Herzog and Hibi. Here, we mention a partial form of this conjecture:

\begin{Conjecture}\label{conj-extremal betti}
	\cite[Page~68]{EHH}
	Let $G$ be a graph. Then $\reg (J_G)=\reg (\ini_{<_{lex}}J_G)$.  
\end{Conjecture}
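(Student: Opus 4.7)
The plan is to follow the reduction strategy that succeeded for regularity $3$ in Corollary~\ref{initial reg 3}, now extended to arbitrary regularity. The easy inequality $\reg(J_G) \le \reg(\ini_{<_{\lex}} J_G)$ holds by \cite[Corollary~3.3.4]{HH}, so the entire content of the conjecture lies in the reverse direction.

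The first reduction is to the connected case: if $G$ has connected components $H_1,\ldots,H_t$ on disjoint vertex sets, then $S/J_G$ and $S/\ini_{<_{\lex}} J_G$ are tensor products of the corresponding objects over disjoint variable sets, so both regularities are additive (each equals $\sum_i \reg(J_{H_i}) - t + 1$, and likewise for the initial ideals). Thus it suffices to prove the conjecture componentwise. The complete case is handled by Theorem~\ref{any order}, where both regularities equal $2$.

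For connected, non-complete $G$, the key lever is Theorem~\ref{reg-join}. If $G = G_1 * G_2$ admits a nontrivial join decomposition with $|V(G_i)| < |V(G)|$, then part~(a) gives
\[
\reg(J_G) = \max\{\reg(J_{G_1}), \reg(J_{G_2}), 3\},
\]
while part~(b) gives the identical formula for the lex initial ideal. Hence induction on $|V(G)|$ settles the conjecture for $G$ as soon as it holds for $G_1$ and $G_2$. This mirrors the argument in Corollary~\ref{initial reg 3} essentially verbatim, with no restriction on the value of the regularity.

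What remains is the class of \emph{join-indecomposable} connected non-complete graphs (those not expressible as $G_1 * G_2$ nontrivially), and this is where I expect the main obstacle. For such $G$ the join formula provides no leverage, and one must compare the minimal graded free resolutions of $J_G$ and $\ini_{<_{\lex}} J_G$ directly. A natural starting point is the reduced Gröbner basis of $J_G$ from \cite{HHHKR}, whose generators correspond to admissible paths in $G$; one might try to polarize $\ini_{<_{\lex}} J_G$ and realize the result as the Stanley--Reisner ideal of a shellable or sequentially Cohen--Macaulay complex, from which one could read off an upper bound on $\reg(\ini_{<_{\lex}} J_G)$ matching known upper bounds on $\reg(J_G)$. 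The difficulty is that, in contrast to the join case, the initial ideal carries strictly more combinatorial data than the binomial edge ideal, and controlling its higher Betti numbers on prime graphs (paths, cycles, $2$-connected graphs) would require a substantially finer analysis than the short exact sequences exploited in Section~\ref{join}.
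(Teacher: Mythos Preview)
The statement you are attempting to prove is not proved in the paper: it is stated there as an open conjecture (Conjecture~\ref{conj-extremal betti}, due to Ene, Herzog and Hibi), and the paper's only contribution toward it is the remark, immediately following the conjecture, that Theorem~\ref{reg-join} shows the class of graphs for which it holds is join-closed. Your reduction recovers exactly this observation --- the easy inequality from \cite[Corollary~3.3.4]{HH}, additivity over connected components, the complete case via Theorem~\ref{any order}, and the inductive step on a nontrivial join decomposition via both parts of Theorem~\ref{reg-join} --- so on that front you are in complete agreement with what the paper actually establishes.

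The genuine gap is the one you yourself name: connected non-complete graphs that admit no nontrivial join decomposition. This is not a minor leftover; it is the entire content of the conjecture. Almost every graph is join-indecomposable (equivalently, has connected complement), so the join recursion handles only a thin slice of cases, and your proposal offers no argument there beyond speculation about polarization and shellability. Nothing in the paper, in Theorem~\ref{reg-join}, or in the short exact sequence machinery of Section~\ref{join} gives any purchase on this case, and indeed the paper does not claim to resolve it. Your proposal is therefore not a proof but a correct identification of what is known (join-closedness) together with an honest acknowledgment of what is not; that matches the paper's own position, but it should not be presented as a proof attempt for the conjecture itself.
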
 

Theorem~\ref{reg-join} indicates that if the above conjecture holds for $G_1$ and $G_2$, then it holds for the graph $G_1*G_2$, as well. Therefore, the class of graphs for which Conjecture~\ref{conj-extremal betti} holds, is also join-closed. Among the known graphs in this class, are the closed graphs and $C_{\ell}$-graphs, (see \cite{CDI} and \cite{EZ}). Note that the join of two graphs of aforementioned forms is not necessarily of the same type.   

\medskip 
The next conjecture we would like to look at, is the following due to the authors:

\begin{Conjecture}\label{conj-ours}
	\cite[Page~12]{SK1}
	Let $G$ be a graph, and let $c(G)$ be the number of maximal cliques of $G$. Then $\reg (J_G)\leq c(G)+1$. 
\end{Conjecture}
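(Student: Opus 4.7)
Since Conjecture~\ref{conj-ours} has remained open, my plan is not to attack it in full generality but rather to use Theorem~\ref{reg-join} to show that the class of graphs satisfying the conjecture is \emph{join-closed}. This mirrors the discussion just given for Conjecture~\ref{conj-extremal betti} and genuinely enlarges the class of graphs for which the conjecture is known, since an iterated join of, say, closed graphs or block graphs (for which Conjecture~\ref{conj-ours} is verified in \cite{EZ}) need not itself be closed or a block graph.

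The key combinatorial input is the identity $c(G_1 * G_2) = c(G_1) \cdot c(G_2)$. This holds because every vertex of $G_1$ is adjacent to every vertex of $G_2$ in $G_1 * G_2$, so any clique of $G_1 * G_2$ decomposes uniquely as $C_1 \cup C_2$ with $C_i$ a clique of $G_i$, and such a union is maximal in $G_1 * G_2$ precisely when each $C_i$ is maximal in $G_i$.

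Assuming $\reg(J_{G_i}) \leq c(G_i) + 1$ for $i = 1, 2$, and that $G_1, G_2$ are not both complete, Theorem~\ref{reg-join}(a) gives
\[
\reg(J_{G_1*G_2}) = \max\{\reg(J_{G_1}), \reg(J_{G_2}), 3\} \leq \max\{c(G_1)+1,\, c(G_2)+1,\, 3\}.
\]
Since $c(G_i) \geq 1$, one has $c(G_1) c(G_2) + 1 \geq c(G_j) + 1$ for each $j \in \{1,2\}$. Moreover, a graph with $c(G) = 1$ is complete, so the not-both-complete hypothesis forces some $c(G_i) \geq 2$, whence $c(G_1*G_2) + 1 = c(G_1) c(G_2) + 1 \geq 3$. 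Thus each of the three terms on the right is dominated by $c(G_1*G_2) + 1$, yielding the desired inequality. The remaining case where both $G_i$ are complete is immediate: $G_1*G_2$ is then complete, so $\reg(J_{G_1*G_2}) = 2 = c(G_1*G_2) + 1$.

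The main obstacle is that this argument only disposes of graphs admitting a non-trivial join decomposition (equivalently, graphs whose complement is disconnected). Graphs whose complement is connected remain untouched, and for such $G$ Theorem~\ref{reg-join} offers no leverage since it simply does not apply. A full proof of Conjecture~\ref{conj-ours} will therefore have to treat these ``join-irreducible'' graphs by an essentially different method, and that is where I expect the real difficulty to lie.
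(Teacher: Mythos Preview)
Your proposal is correct and follows exactly the paper's own treatment: the paper does not prove Conjecture~\ref{conj-ours} but merely observes, via Theorem~\ref{reg-join}(a) and the identity $c(G_1*G_2)=c(G_1)c(G_2)$, that the class of graphs satisfying it is join-closed. You have supplied more detail than the paper (which leaves the clique-count identity and the inequality check to the reader), but the approach is identical.
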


The class of graphs for which Conjecture~\ref{conj-ours} holds, is also join-closed. Indeed, by Theorem~\ref{reg-join}~(a), if $G_1$ and $G_2$ belong to this class, then $G_1*G_2$ belongs too, since it is easily seen that $c(G_1*G_2)=c(G_1)c(G_2)$. From the graphs of this class we can mention closed graphs and block graphs, (see \cite{EZ}), as well as graphs with no triangles where $c(G)$ equals the number of edges. It is clear that the join of these types of graphs could provide graphs not with the same type.




\begin{thebibliography}{99}

\bibitem{BN} A. Banerjee, L. Nunez-Betancourt, {\em Graph Connectivity and Binomial edge ideals}, Proceed. of the Am. Math. Soc., Vol. 145 (2), 2017, 487-499. 


\bibitem{BMS} D. Bolognini, A. Macchia, F. Strazzanti, {\em Binomial edge ideals of bipartite graphs}, arXiv:1704.00152v1, 2017, 1-29.


\bibitem{B} A. Boocher, {\em Free resolutions and sparse determinantal ideals}, Math. Res. Lett. 18 (2011), 10001-10017.


\bibitem{CDI} F. Chaudhry, A. Dokuyucu, R. Irfan, {\em On the binomial edge ideals of block graphs}, An. S¸t. Univ. Ovidius Constanta, 24(2), 2016, 149--158.


\bibitem{C} A. Conca, {\em Gorenstein ladder determinantal rings}, J. London Math. Soc. 54(3), (1996), 453-474.


\bibitem{CS} E. Camby, O. Schaudt, {\em A new characterization of Pk-free graphs}, Algorithmica, 75(1), (2016), 205-217.  


\bibitem{D} A. Dokuyucu, {\em Extremal Betti numbers of some classes of binomial edge ideals}, to appear in Math. Reports.


\bibitem{EHH} V. Ene, J. Herzog, T. Hibi, {\em Cohen-Macaulay binomial edge ideals}, Nagoya Math. J. 204 (2011), 57-68.




\bibitem{EZ} V. Ene, A. Zarojanu, {\em On the regularity of binomial edge ideals}, Math. Nachr. 288,  No. 1 (2015), 19-24.


\bibitem{Heroli} H. Baskoroputro, {\em On the binomial edge ideals of proper interval graphs}, arXiv:1611.10117v1. 


\bibitem{HH} J. Herzog, T. Hibi, {\em Monomial ideals}, Grad. Texts in Math.260, Springer, London, 2010. 


\bibitem{HHHKR} J. Herzog, T. Hibi, F. Hreinsdotir, T. Kahle, J. Rauh, {\em Binomial edge ideals and conditional independence statements},
Adv. Appl. Math. 45 (2010), 317-333.


\bibitem{HKS} J. Herzog, D. Kiani and S. Saeedi Madani, {\em The linear strand of determinantal facet ideals}, Michigan. Math. J., 66 (2017), 107-123.


\bibitem{KS} D. Kiani, S. Saeedi Madani, {\em Binomial edge ideals with pure resolutions}, Collect. Math. 65 (2014), 331-340.


\bibitem{KS1} D. Kiani, S. Saeedi Madani, {\em Some Cohen-Macaulay and unmixed binomial edge ideals}, Comm. Algebra. 43 (2015), 5434-5453.


\bibitem{KS2} D. Kiani, S. Saeedi Madani, {\em The Castelnuovo-Mumford regularity of binomial edge ideals}, J. Combin. Theory Ser. A. 139 (2016), 80-86.


\bibitem{M} K. Matsuda, {\em Weakly closed graphs and F-purity of binomial edge ideals}, (2012) arXiv:1209.4300v1.


\bibitem{MM} K. Matsuda, S. Murai, {\em Regularity bounds for binomial edge ideals}, J. Commutative Algebra. 5(1) (2013), 141-149.


\bibitem{O} M. Ohtani, {\em Graphs and ideals generated by some 2-minors}, Comm. Algebra. 39 (2011), 905-917.


\bibitem{P} I. Peeva, {\em Graded syzygies}, Springer, (2010).


\bibitem{R} G. Rinaldo, {\em Cohen-Macaulay binomial edge ideals of small deviation}, Bull. Math. Soc. Sci. Math. Roumanie Tome 56(104) No. 4, (2013), 497-503.


\bibitem{RR}  A. Rauf, G. Rinaldo, {\em Construction of Cohen-Macaulay binomial edge ideals}, Comm. Algebra., (2014),  238-252.



\bibitem{SK} S. Saeedi Madani, D. Kiani, {\em Binomial edge ideals of graphs}, Electronic J. Combin. 19(2) (2012), $\sharp$ P44.


\bibitem{SK1} S. Saeedi Madani, D. Kiani, {\em On the binomial edge ideal of a pair of graphs}, Electronic J. of Combinatorics. 20(1) (2013), $\sharp$ P48.


\bibitem{SZ} P. Schenzel, S. Zafar, {\em Algebraic properties of the binomial edge ideal of a complete bipartite graph},  An. St. Univ. Ovidius Constanta, Ser. Mat. 22(2) (2014), 217-237.


\bibitem{Z} S. Zafar, {\em On approximately Cohen-Macaulay binomial edge ideal}, Bull. Math. Soc. Sci. Math. Roumanie Tome 55(103) No. 4, 2012, 429-442. 


\bibitem{ZZ} Z. Zahid, S. Zafar, {\em On the Betti numbers of some classes of binomial edge ideals}, The Electronic Journal of Combinatorics. 20(4) (2013), $\sharp$ P37.





\end{thebibliography}
\end{document}